\newtheorem{theorem}{Theorem}
\newtheorem{definition}[theorem]{Definition}
\newtheorem{lemma}[theorem]{Lemma}
\theoremstyle{remark}
\newtheorem{remark}[theorem]{Remark}
\renewenvironment{proof}[1][Proof]{\noindent\textbf{#1.} }{\ \rule{0.5em}{0.5em}}
\numberwithin{equation}{section}
\numberwithin{theorem}{section}
\begin{document}

\title{Adapted complex structures and the geodesic flow}
\author{Brian C. Hall\thanks{\emph{University of Notre Dame, 255 Hurley Building, Notre Dame, IN \ 46556 \ USA}\newline E-mail: bhall@nd.edu}\ \ and William D. Kirwin\thanks{\emph{Center for Mathematical Analysis, Geometry and Dynamical Systems, Insituto Superior T\'ecnico, Av. Rovisco Pais, 1049-001 Lisbon, Portugal}.\newline E-mail: will.kirwin@gmail.com\newline The first author was supported in part by NSF grant DMS-0555862. The second author would like to thank the University of Hong Kong and the Max Planck Institute for Mathematics in the Sciences for their hospitality during the preparation of this paper.}}
\date{}

\maketitle

\begin{abstract}
In this paper, we give a new construction of the \textit{adapted complex structure} on a neighborhood of the zero section in the tangent bundle of a compact, real-analytic Riemannian manifold. Motivated by the ``complexifier'' approach of T. Thiemann as well as certain formulas of V. Guillemin and M. Stenzel, we obtain the polarization associated to the adapted complex structure by applying the ``imaginary-time geodesic flow'' to the vertical polarization. Meanwhile, at the level of functions, we show that every holomorphic function is obtained from a function that is constant along the fibers by ``composition with the imaginary-time geodesic flow.'' We give several equivalent interpretations of this composition, including a convergent power series in the vector field generating the geodesic flow.
\end{abstract}

{\small
\noindent\textbf{Keywords}: adapted complex structure, Grauert tube, geodesic flow, geometric quantization, K\"ahler
structure, polarization\\
\noindent\textbf{MSC (2000)}: 53D25, 32D15, 32Q15, 53D50, 81S10
}

\section{Introduction}

Let $(M^{n},g)$ be a compact, connected, real-analytic Riemannian manifold. Let $TM$ be the tangent bundle of $M$, permanently identified with the cotangent bundle by means of the metric. We denote points in $TM$ as $z=(x,v)$, where $x\in M$ and $v\in T_{x}M.$ Let $T^{R}M$ denote the set of points $(x,v)\in TM$ with $g(v,v)<R^{2},$ i.e., the set of tangent vectors of length less than $R.$ Let $E$ be the ``energy'' function defined by $E(x,v)=\frac{1}{2}g(v,v).$ Let $\Phi_{\sigma}:TM\rightarrow TM$ then denote the time-$\sigma$ geodesic flow, which is the Hamiltonian flow associated to the function $E.$

Working independently, L. Lempert and R. Sz\H{o}ke \cite{Lempert-Szoke,Szoke} and V. Guillemin and M. Stenzel \cite{Guillemin-Stenzel1,Guillemin-Stenzel2} introduced natural complex structures defined on $T^{R}M,$ for some sufficiently small $R.$ Lempert--Sz\H{o}ke characterized their complex structure in terms of geodesics---see Theorem \ref{adapted.thm}. Meanwhile, Guillemin--Stenzel characterized their complex structure in terms of a K\"{a}hler potential and an involution---see Section \ref{kahler.sec}. Both sets of authors show that there is a unique such complex structure on $T^{R}M$ for some sufficiently small $R.$ It turns out that the two characterizations are equivalent, and complex structures satisfying these characterizations are referred to as \textit{adapted complex structures}. The adapted complex structure fits together with the canonical symplectic structure on $TM\cong T^{\ast}M$ to make $TM$ into a K\"{a}hler manifold. Numerous authors have studied adapted complex structures from various points of view; see, for example, \cite{Aguilar01,Burns-Hind01,Szoke01,Halverscheid01,Totaro03}.

There are certain special cases in which the adapted complex structure exists on all of $TM,$ including the case in which $M$ is a compact Lie group with a bi-invariant metric. In the compact group case, one obtains a nice quantization of $TM$ by performing geometric quantization using the K\"{a}hler structure associated to the adapted complex structure. The resulting Hilbert space can be identified \cite{Hall02} with the generalized Segal--Bargmann space introduced in \cite{Hall94,Hall97}. See also \cite{Florentino-Matias-Mourao-Nunes05,Florentino-Matias-Mourao-Nunes06}, \cite{Huebschmann}, and the discussion in Section \ref{quant.sec}.

In this paper, we build on the ``complexifier'' method of T. Thiemann \cite{Thiemann96, Thiemann1,Thiemann06} to give a new construction of the adapted complex structure. Our main construction, though, is phrased in terms of involutive Lagrangian distributions.\footnote{In the language of geometric quantization, an involutive Lagrangian distribution is called a \textit{polarization.}} If there is a K\"{a}hler structure on a symplectic manifold $N$, then the distribution $P=T^{(1,0)}N$, which determines the complex structure, is an involutive Lagrangian distribution. In the case $N=TM$, there is one obvious involutive Lagrangian distribution, called the \textit{vertical distribution}, in which the distribution $P_{z}$ at the point $z\in TM$ is the (complexification of) the vertical subspace.

Our main result is that the involutive Lagrangian distribution associated to the adapted complex structure can be obtained from the vertical distribution by the ``time-$i$ geodesic flow.'' The way we make sense of this is to first push forward the vertical distribution by $\Phi_{\sigma},$ for real values of $\sigma,$ obtaining a family $P_{z}(\sigma)$ of involutive Lagrangian distributions. Then we show that for all $z$ in some $T^{R}M,$ one can analytically continue the map $\sigma\rightarrow P_{z}(\sigma)$ to a holomorphic map of a disk of radius greater than $1$ into the Grassmannian of Lagrangian subspaces of $T_{z}^{\mathbb{C}}TM.$ This analytic continuation serves to define $P_{z}(i)$ on $T^{R}M.$ We then verify that on some possibly smaller tube $T^{R^{\prime}}M,$ there is a complex structure for which $P_{z}(i)$ is, at each point, the $(1,0)$ subspace. Finally, we show that this complex structure satisfies both the Lempert--Sz\H{o}ke conditions and the Guillemin--Stenzel conditions.

Once the construction of the adapted complex structure is complete, we relate the adapted complex structure to the imaginary-time geodesic flow at the level of functions. Although these results have already appeared in the works of Guillemin--Stenzel and Sz\H{o}ke, we include them here for completeness. We show that holomorphic functions with respect to the adapted complex structure can be obtained from functions that are constant on the fibers by ``composing with the time-$i$ geodesic flow.'' What this means is the following. If a function $\psi$ is constant along the leaves of the vertical distribution (the fibers of the tangent bundle), then we consider $\psi\circ\Phi_{\sigma}.$ If $\psi$ is real analytic, then for all $z$ in some $T^{R}M,$ the map $\sigma\rightarrow\psi(\Phi_{\sigma}(z))$ admits an analytic continuation to a disk of radius greater than $1.$ We show that the function $\psi(\Phi_{i}(z)),$ interpreted in terms of the just-mentioned analytic continuation, is holomorphic on $T^{R}M$ and that every holomorphic function on $T^{R}M$ arises in this way. Equivalently, $\psi\circ\Phi_{i}$ can be constructed by a convergent power series in the vector field generating $\Phi_{\sigma}$ (this is essentially Thiemann's approach) or by analytically continuing the exponential map for $M$ (this is contained implicitly in the work of Sz\H{o}ke in the proof of Proposition 3.2 in \cite{Szoke} and in Theorem 3.4 in \cite{Szoke95}, and is also essentially the approach of Guillemin and Stenzel, in Section 5 of \cite{Guillemin-Stenzel2}).

Although the imaginary-time geodesic flow appears already in \cite{Guillemin-Stenzel2} and \cite{Halverscheid01} and in \cite{Thiemann96} at the level of functions, we believe that our approach sheds new light on the subject by giving a self-contained construction of the adapted complex structure using this flow. For example, this way of thinking provides a simple explanation for the formula expressing the adapted complex structure in terms of Jacobi fields (Section \ref{jacobi.sec}), and sheds light on the results of \cite{Florentino-Matias-Mourao-Nunes05,Florentino-Matias-Mourao-Nunes06} (Section \ref{quant.sec}). We also hope that, as suggested by Thiemann, the use of other imaginary-time Hamiltonian flows will lead to the construction of new complex structures.

\section{Main results}
\label{sec:main results}

As mentioned above, we identify the tangent and cotangent bundles $TM\simeq T^*M$ via the metric $g$. Under this identification, we obtain from the canonical $1$-form and symplectic form on $T^*M$ a $1$-form $\Theta$ and symplectic form $\omega=-d\Theta$ on $TM$. Given a function $f\in C^1(M),$ we defined its Hamiltonian vector field $X_f$ by $\omega(X_f,\cdot)=df.$

Consider the energy function $E(x,v)=\tfrac12 g(v,v).$ Denote by $\Phi_\sigma:TM\rightarrow TM$ the time-$\sigma$ flow of its Hamiltonian vector field $X_E.$ As is well known (and can be easily verified by computation in local coordinates, for example), $\Phi_\sigma$ is the geodesic flow; that is, if $\gamma_z:\mathbb{R}\rightarrow M$ denotes the geodesic determined by the point $z\in TM,$ and $\dot\gamma_z$ denotes its derivative, then
\[\Phi_\sigma(z)=\dot\gamma_z(\sigma).\]

Denote by $V_{z}\subset T_{z}^{\mathbb{C}}TM$ the complexification of the vertical tangent space to $TM$ at $z$. (That is, the vertical subspace at $(x,v)\in TM$ is the complexified tangent space to the fiber $T_{x}M.$) Our principal object of interest is the pushforward of the vertical distribution by the geodesic flow. For any real number $\sigma,$ consider
\[
P_{z}(\sigma):=\left(  \Phi_{\sigma}\right)  _{\ast}(V_{\Phi_{-\sigma}(z)})\subset T_{z}^{\mathbb{C}}TM.
\]
For each fixed $z\in TM,$ we are going to analytically continue the map $\sigma\rightarrow P_{z}(\sigma)$ to a holomorphic map of a disk about the origin in $\mathbb{C}$ into the Grassmannian of $n$-dimensional complex subspaces of $T_{z}^{\mathbb{C}}TM.$

The first main result of this paper is the following theorem, a concatenation of Theorems \ref{thm:continuation}, \ref{thm:integrable}, and \ref{thm:pos Lagr}, which are proved in Section \ref{sec:geod flow}.

\begin{theorem}
\label{thm:acs existence}There exist $R>0$ and $\epsilon>0$ such that

\begin{enumerate}
\item For each $z\in T^{R}M,$ the family $\sigma\mapsto P_{z}(\sigma)$ can be analytically continued in $\sigma$ to a disk $D_{1+\epsilon}$ of radius $1+\epsilon$ around the origin, thus giving a meaning to the expression $P_{z}(i).$

\item For each $z\in T^{R}M,$ $P_{z}(i)$ intersects its complex conjugate only at zero.

\item Let $J$ be the unique almost complex structure on $T^{R}M$ such that the restriction of $J$ to $P_{z}(i)$ is multiplication by $i.$ Then $J$ is integrable and fits together with the canonical symplectic form $\omega$ on $TM$ so as to give a K\"{a}hler structure to $T^{R}M.$ This means, in particular, that $P_{z}(i)$ is a Lagrangian subspace relative to $\omega.$
\end{enumerate}
\end{theorem}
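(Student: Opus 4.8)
The plan is to prove the three parts of Theorem~\ref{thm:acs existence} by working at a fixed point $z\in TM$ and studying the family $\sigma\mapsto P_z(\sigma)$ of $n$-dimensional complex subspaces of $T_z^{\mathbb{C}}TM$ as a curve in the relevant Grassmannian. For part~(1), I would first choose local coordinates near a geodesic segment and write the push-forward $(\Phi_\sigma)_*$ acting on the vertical space in terms of Jacobi fields: a vertical tangent vector at $\Phi_{-\sigma}(z)$ propagates under $d\Phi_\sigma$ to a combination of Jacobi-field data along $\gamma_z$, so $P_z(\sigma)$ is the span of $n$ explicit vectors whose components are real-analytic (indeed entire, if we work in the right trivialization) functions of $\sigma$, because $E$ and $g$ are real-analytic and the geodesic flow depends real-analytically on time. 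To make ``analytic continuation of a point in the Grassmannian'' precise, I would pass to the affine chart of the Grassmannian given by the vertical space itself as a complement — i.e.\ represent $P_z(\sigma)$ as the graph of a linear map $A_z(\sigma)$ from a fixed reference horizontal space to $V_z$ — and show that $A_z(\sigma)$ is real-analytic in $\sigma$ near $0$ with a radius of convergence bounded below uniformly for $z$ in a sufficiently small tube $T^R M$. Compactness of $M$ plus real-analytic dependence of all the data on $(z,\sigma)$ gives the uniform lower bound on the radius; shrinking $R$ if necessary, we may assume this radius exceeds $1+\epsilon$, which defines $P_z(i)$.

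For part~(2), I would note that $P_z(\sigma)$ and $\overline{P_z(\sigma)}$ are complex conjugates, and that for $\sigma=0$ we have $P_z(0)=V_z$, whose conjugate is $V_z$ itself — so at $\sigma=0$ the two subspaces coincide and part~(2) fails; the point is precisely that moving to imaginary time $\sigma=i$ separates them. The cleanest route is to observe that $P_z(i)\cap\overline{P_z(i)}=\{0\}$ is an open condition on the pair of subspaces (non-vanishing of a determinant built from bases of the two spaces), so it suffices to verify it at a single convenient point in each connected component, or — better — to compute the relevant nondegeneracy invariant directly. I expect the slick way is to compute, for $w\in P_z(i)$, the Hermitian form $\tfrac{1}{i}\omega(w,\bar w)$ and show it is positive definite; positive-definiteness simultaneously forces $P_z(i)\cap\overline{P_z(i)}=\{0\}$ (a nonzero vector in the intersection would be real, killing the form) and delivers the positivity needed for the Kähler condition in part~(3). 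To compute this form I would again use the Jacobi-field representation and reduce, via the real-analytic dependence, to a model computation — for instance on $\mathbb{R}^n$ with the flat metric, where $\Phi_\sigma(x,v)=(x+\sigma v,v)$ and the imaginary-time flow sends $v\mapsto iv$, making everything explicit — then argue that the sign of the form is locally constant and hence inherited from the flat (or small-$v$) case.

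For part~(3), integrability of $J$ is where the involutivity hypothesis earns its keep: $V$ is an involutive distribution (the fibers of $TM$), and push-forward by a diffeomorphism preserves involutivity, so each real-time $P_z(\sigma)$ is involutive; involutivity is a closed condition expressed by the vanishing of certain brackets, and since $P_z(\sigma)$ depends real-analytically on $\sigma$ with the continuation valid on a disk of radius $>1$, the bracket identities persist by analytic continuation to $\sigma=i$. Combined with part~(2), the Newlander--Nirenberg theorem then gives an integrable complex structure $J$ on $T^R M$ (possibly after shrinking $R$) with $T^{(1,0)}=P_\cdot(i)$. For the Kähler compatibility I would show $P_z(i)$ is Lagrangian for $\omega$: each real-time $P_z(\sigma)=(\Phi_\sigma)_*(V_{\Phi_{-\sigma}(z)})$ is Lagrangian because $\Phi_\sigma$ is a symplectomorphism and $V$ is Lagrangian, the Lagrangian condition $\omega|_{P_z(\sigma)}\equiv 0$ is again a real-analytic identity in $\sigma$, so it continues to $\sigma=i$; together with positivity of $\tfrac1i\omega(\cdot,\overline{\cdot})$ from part~(2), this is exactly the statement that $(\omega,J)$ defines a Kähler metric. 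The main obstacle, I expect, is part~(1): making rigorous the notion of a holomorphic curve in a Grassmannian and, crucially, obtaining the \emph{uniform} lower bound on the radius of convergence as $z$ ranges over a tube — this is where one must be careful to choose trivializations and estimates that do not degenerate, and where the real-analyticity and compactness hypotheses are essential. Everything after that is a matter of propagating three identities (involutivity, Lagrangian, positivity) from real time to $\sigma=i$ by analytic continuation.
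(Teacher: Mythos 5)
Your outlines of parts (2) and (3) track the paper's argument closely: the zero-section computation (where $(\Phi_\sigma)_*$ at $(x,0)$ is the block matrix $\left(\begin{smallmatrix} I & \sigma I\\ 0 & I\end{smallmatrix}\right)$, so $P_{(x,0)}(i)$ is the standard Euclidean $(1,0)$-space), together with continuity and compactness of $M$, give positivity and trivial intersection with the conjugate on a tube; involutivity and the Lagrangian condition hold for real $\sigma$ because $\Phi_\sigma$ is a symplectomorphism carrying the involutive Lagrangian distribution $V$, and both are real-analytic identities in $\sigma$ that persist under analytic continuation to $\sigma=i$. (The paper is more careful than you are about the fact that frames for $P_z(\sigma)$ are only defined locally in $\sigma$ --- it runs a connectedness argument along the segment from $0$ to $i$ --- but your idea is the right one.)

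The gap is in part (1), at exactly the step you wave at: ``shrinking $R$ if necessary, we may assume this radius exceeds $1+\epsilon$.'' Compactness and joint real-analyticity of $(z,\sigma)\mapsto P_z(\sigma)$ give a uniform lower bound $\eta>0$ on the radius of continuation over any compact set, but nothing forces $\eta>1$, and shrinking the tube does not by itself increase $\eta$: the pointwise radius of continuation of a jointly real-analytic family is not lower semicontinuous (consider $f(z,\sigma)=z/(1+\sigma^2)$, which is entire in $\sigma$ at $z=0$ but has radius exactly $1$ for every $z\neq0$), so the fact that the family is polynomial in $\sigma$ on the zero section does not propagate to nearby fibers. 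The missing ingredient is the scaling identity $P_{N_cz}(\sigma)=(N_c)_*\bigl(P_z(c\sigma)\bigr)$, where $N_c(x,v)=(x,cv)$, which follows from the degree-$2$ homogeneity of $E$ in the fibers (equivalently $\Phi_\sigma=N_c\Phi_{c\sigma}N_{1/c}$). This converts the uniform radius $\eta$ obtained by compactness on the unit sphere bundle into radius $\eta/\left\vert v\right\vert>1$ whenever $\left\vert v\right\vert<\eta$, i.e., radius $1+\epsilon$ on $T^RM$ for $R<\eta$. Without this step (or an equivalent quantitative estimate, e.g.\ on the Jacobi-field matrix $f_z(\sigma)$ for small $\left\vert v\right\vert$), part (1) does not close.
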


The family $P_{z}(i)$ of subspaces constitutes a positive involutive Lagrangian distribution, that is, a \textit{K\"{a}hler distribution.}

Our next results show that the complex structure we construct is the same as the one constructed (independently and from different points of view) by Lempert--Sz\H{o}ke and Guillemin--Stenzel. See also Section \ref{jacobi.sec}, where we show how to compute the complex structure in terms of Jacobi fields, in a way that agrees with the corresponding calculations in \cite{Lempert-Szoke}. We recall first the definition of the adapted complex structure due to Lempert and Sz\H{o}ke.

Consider a point $z=(x,v)$ in $TM$ with $|v|=1.$ Let $S_{R}\subset\mathbb{C}$ denote the strip $\{(\sigma,\tau)|~|\tau|<R\}.$ Define a map $\Psi_{z}:S_{R}\rightarrow T^{R}M$ by $\Psi_{z}(\sigma,\tau)=(\gamma_z(\sigma),\tau\dot{\gamma_z}(\sigma)).$ The following definition is due to Lempert and Sz\H{o}ke \cite[Def. 4.1]{Lempert-Szoke}.

\begin{definition}(Lempert--Sz\H{o}ke)
A complex structure on $T^{R}M$ is called \emph{adapted} (to the metric on $M$) if for all $z=(x,v)$ with $|v|=1,$ $\Psi_{z}$ is holomorphic as a map of $S_{R}\subset\mathbb{C}$ into $T^{R}M.$
\end{definition}

\begin{theorem}
The complex structure on $T^{R}M$ described in Theorem \ref{thm:acs existence} is adapted.
\end{theorem}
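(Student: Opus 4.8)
The plan is to verify the Cauchy--Riemann equations for $\Psi_z$ pointwise, using the fact (Theorem~\ref{thm:acs existence}(3)) that $(T^RM,J)$ is a complex manifold whose holomorphic tangent space at each point $w$ is $P_w(i)$. Fix $z=(x,v)$ with $|v|=1$ and a point $(\sigma_0,\tau_0)\in S_R$; set $w_0=\Psi_z(\sigma_0,\tau_0)\in T^RM$. With respect to the coordinate $\sigma+i\tau$ on $S_R$, holomorphy of $\Psi_z$ at $(\sigma_0,\tau_0)$ is exactly the identity $J\bigl((\Psi_z)_*\partial_\sigma\bigr)=(\Psi_z)_*\partial_\tau$ at $w_0$, so it is this identity that I would prove for all $(\sigma_0,\tau_0)$.

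First I would compute the two pushforwards. Writing $\delta_\lambda\colon TM\to TM$ for fiberwise multiplication by $\lambda$ and $\mathcal{E}$ for its generator (the vertical ``Euler'' vector field, so that $\mathcal{E}(y,w)$ is the vertical lift of $w$), one has $\Psi_z(\sigma,\tau)=\delta_\tau\bigl(\Phi_\sigma(z)\bigr)$ together with the homogeneity relation $\Phi_\sigma\circ\delta_\lambda=\delta_\lambda\circ\Phi_{\lambda\sigma}$. Differentiating this relation in $\sigma$ at $0$ shows $(\delta_\lambda)_*X_E=\tfrac1\lambda\,X_E\circ\delta_\lambda$, and a short computation then gives, for $\tau_0\neq 0$,
\[
(\Psi_z)_*\partial_\sigma=\tfrac1{\tau_0}X_E(w_0),\qquad (\Psi_z)_*\partial_\tau=\tfrac1{\tau_0}\mathcal{E}(w_0);
\]
indeed these are respectively the horizontal and vertical lifts of $\dot\gamma_z(\sigma_0)$ at $w_0$, the vanishing of the vertical part of $(\Psi_z)_*\partial_\sigma$ reflecting the geodesic equation. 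Thus the identity to be proved reduces to $J\,X_E(w_0)=\mathcal{E}(w_0)$.

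To get this, I would exhibit an explicit curve in $P_{w_0}(\sigma)$ which is \emph{affine} in $\sigma$. Differentiating $\Phi_\sigma\circ\delta_\lambda=\delta_\lambda\circ\Phi_{\lambda\sigma}$ in $\log\lambda$ at $\lambda=1$ gives $(\Phi_\sigma)_*\mathcal{E}_p=\sigma\,X_E(\Phi_\sigma(p))+\mathcal{E}(\Phi_\sigma(p))$ for every $p\in TM$. Taking $p=\Phi_{-\sigma}(w_0)$ and using that $\mathcal{E}_p$ is vertical yields
\[
\sigma\,X_E(w_0)+\mathcal{E}(w_0)=(\Phi_\sigma)_*\bigl(\mathcal{E}_{\Phi_{-\sigma}(w_0)}\bigr)\in(\Phi_\sigma)_*\bigl(V_{\Phi_{-\sigma}(w_0)}\bigr)=P_{w_0}(\sigma)
\]
for all real $\sigma$. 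The left-hand side is affine, hence entire, in $\sigma$, while by Theorem~\ref{thm:acs existence}(1) the family $\sigma\mapsto P_{w_0}(\sigma)$ continues holomorphically to $D_{1+\epsilon}$; since ``belonging to $P_{w_0}(\sigma)$'' is, in the single variable $\sigma$, a holomorphic condition holding along the real axis, by the identity theorem it holds throughout $D_{1+\epsilon}$. In particular, at $\sigma=i$ we get $iX_E(w_0)+\mathcal{E}(w_0)\in P_{w_0}(i)$, the $+i$--eigenspace of $J$; hence $J\bigl(iX_E(w_0)+\mathcal{E}(w_0)\bigr)=i\bigl(iX_E(w_0)+\mathcal{E}(w_0)\bigr)$, and separating real and imaginary parts (all of $J$, $X_E(w_0)$, $\mathcal{E}(w_0)$ being real) gives precisely $J\,X_E(w_0)=\mathcal{E}(w_0)$.

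This establishes $J(\Psi_z)_*\partial_\sigma=(\Psi_z)_*\partial_\tau$ at every point with $\tau_0\neq 0$; since $(\Psi_z)_*\partial_\sigma$, $(\Psi_z)_*\partial_\tau$ depend continuously on $(\sigma,\tau)$ and $J$ is continuous, the vector $J(\Psi_z)_*\partial_\sigma-(\Psi_z)_*\partial_\tau$ is a continuous vector field along $\Psi_z$ vanishing on the dense set $\{\tau\neq 0\}$, hence vanishing identically, so $\Psi_z$ is holomorphic on all of $S_R$. The only point beyond elementary homogeneity of the geodesic flow is the analytic-continuation step---knowing that the affine curve, which lies in $P_{w_0}(\sigma)$ for real $\sigma$, still does so at $\sigma=i$---which is exactly the content of Theorem~\ref{thm:acs existence}(1) (equivalently Theorem~\ref{thm:continuation}); the routine bookkeeping there is to express ``lies in $P_{w_0}(\sigma)$'' as a holomorphic condition via a local holomorphic frame for the family. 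Finally, the affine curve $\sigma\mapsto\sigma X_E(w_0)+\mathcal{E}(w_0)$ is just the family of tangential Jacobi fields $t\mapsto(t+\sigma)\dot\gamma_{w_0}(t)$ in disguise, which ties this argument to the Jacobi-field description of Section~\ref{jacobi.sec}.
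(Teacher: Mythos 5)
Your proof is correct and is essentially the paper's own argument: your affine curve $\sigma\mapsto\sigma X_E(w_0)+\mathcal{E}(w_0)$ is exactly $\tau_0$ times the paper's vector $(\Psi_z)_*(\partial/\partial\tau)+t\,(\Psi_z)_*(\partial/\partial\sigma)$, which the paper obtains from the invariance of the strip $\Psi_z(S_R)$ under the geodesic flow rather than from the homogeneity relation $\Phi_\sigma\circ\delta_\lambda=\delta_\lambda\circ\Phi_{\lambda\sigma}$, and the analytic continuation to $\sigma=i$ followed by separating real and imaginary parts of the eigenvector equation is identical. The only cosmetic difference is that the paper's formulation covers the points with $\tau_0=0$ directly, where you instead pass to them by continuity.
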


In light of the preceding theorem, we will henceforth refer to the complex structure in Theorem \ref{thm:acs existence} as the adapted complex structure on $T^{R}M.$ We should emphasize, though, that our proofs are independent of the work of Lempert--Sz\H{o}ke and of that of Guillemin--Stenzel. The main interest in our results is the clear and unifying geometric picture that the adapted complex structure arises from the time-$i$ complexified geodesic flow.

Given the above result, the next theorem follows from the work of Lempert and Sz\H{o}ke (it is contained in Corollary 5.5, Theorem 5.6 and Corollary 5.7 of \cite{Lempert-Szoke}), although again, we will give an independent proof using our point of view that the adapted complex structure arises from the geodesic flow.

\begin{theorem}
If $R$ is as in Theorem \ref{thm:acs existence}, then the map $\phi(x,v):=(x,-v)$ is antiholomorphic with respect to the complex structure on $T^{R}M$ described in that theorem. Furthermore, the function $\kappa(x,v):=g(v,v)$ is a K\"{a}hler potential for the K\"{a}hler structure described in Point 3 of the theorem, and moreover associated to the canonical $1$-form $\Theta$ on $T^*M,$ that is, $\operatorname{Im}\bar{\partial}\kappa=\Theta.$
\end{theorem}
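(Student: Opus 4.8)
The plan is to derive all three assertions directly from the description of the complex structure in terms of the analytically continued family $P_z(\sigma)$, together with the elementary behavior of $\Theta$, $E$, and $\kappa=2E$ under the geodesic flow; no appeal to the Lempert--Sz\H{o}ke or Guillemin--Stenzel characterizations will be needed. \emph{Antiholomorphicity of $\phi$.} Because the geodesic through $(x,-v)$ is the time-reversal of the geodesic through $(x,v)$, the map $\phi(x,v)=(x,-v)$ intertwines the geodesic flow with its inverse, $\phi\circ\Phi_\sigma=\Phi_{-\sigma}\circ\phi$, and being fiber-linear it preserves the vertical distribution, $\phi_*(V_w)=V_{\phi(w)}$. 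Feeding these two facts into the definition of $P_z(\sigma)$ gives the identity $\phi_*(P_z(\sigma))=P_{\phi(z)}(-\sigma)$ for real $\sigma$. Both sides are holomorphic functions of $\sigma$ on $D_{1+\epsilon}$ (the fixed $\mathbb{C}$-linear map $\phi_*$ on complexified tangent spaces induces a biholomorphism of the relevant Grassmannians), so the identity persists on all of $D_{1+\epsilon}$, and at $\sigma=i$ it reads $\phi_*(P_z(i))=P_{\phi(z)}(-i)$. A parallel continuation argument, starting from the fact that $P_w(\sigma)$ is self-conjugate for real $\sigma$, yields $\overline{P_w(\sigma)}=P_w(\bar\sigma)$, hence $P_{\phi(z)}(-i)=\overline{P_{\phi(z)}(i)}$. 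Thus $\phi_*$ carries the $(1,0)$-space at $z$ onto the $(0,1)$-space at $\phi(z)$, which is precisely the assertion that $\phi$ is antiholomorphic.

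\emph{The identity $\operatorname{Im}\bar\partial\kappa=\Theta$.} The pivotal input is the transformation rule for the canonical $1$-form under the geodesic flow: from $\iota_{X_E}\omega=dE$, $\iota_{X_E}\Theta=\kappa$, and $\mathcal{L}_{X_E}(dE)=0$ one gets $\mathcal{L}_{X_E}\Theta=dE$ and therefore $\Phi_\sigma^*\Theta=\Theta+\sigma\,dE=\Theta+\tfrac{\sigma}{2}d\kappa$, while conservation of energy gives $\kappa\circ\Phi_\sigma=\kappa$ and hence $\Phi_\sigma^*(d\kappa)=d\kappa$. Now fix $z\in T^RM$ and a $(1,0)$-vector $W\in P_z(i)$. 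Expressing $W$ as a complex combination of vectors of the form $\eta(\sigma)=((\Phi_\sigma)_*\xi)_z$, with $\xi$ a vertical vector field near the geodesic segment through $z$, analytically continued from real $\sigma$ to $\sigma=i$, it suffices to treat one such term. Using the transformation rule, the vanishing of $\Theta$ on vertical vectors, and $\Phi_\sigma^*(d\kappa)=d\kappa$, one computes for real $\sigma$ that $\Theta_z(\eta(\sigma))=\tfrac{\sigma}{2}\,d\kappa_z(\eta(\sigma))$; both sides are holomorphic in $\sigma$, so this holds at $\sigma=i$, giving $\Theta_z(W)=\tfrac{i}{2}\,d\kappa_z(W)=\tfrac{i}{2}\,\partial\kappa(W)$, the last equality because $\bar\partial\kappa$ annihilates $P_z(i)$. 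Since a $(1,0)$-form is determined by its restriction to the $(1,0)$-space, this yields $\Theta^{1,0}=\tfrac{i}{2}\partial\kappa$, and adding the complex conjugate (using that $\Theta$ and $\kappa$ are real) gives $\Theta=\tfrac{i}{2}(\partial-\bar\partial)\kappa=\operatorname{Im}\bar\partial\kappa$.

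\emph{$\kappa$ is a K\"ahler potential.} Applying $d$ to $\Theta=\operatorname{Im}\bar\partial\kappa$ and using that $\partial\bar\partial\kappa$ is purely imaginary gives $\omega=-d\Theta=i\,\partial\bar\partial\kappa$; since $(T^RM,J,\omega)$ is already known to be K\"ahler by Theorem \ref{thm:acs existence}(3), $\omega$ is a positive $(1,1)$-form, so $\kappa$ is a global K\"ahler potential, completing the proof.

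\emph{Main obstacle.} The technical heart is the middle step: one must make rigorous the analytic continuation in $\sigma$ of the individual vectors $\eta(\sigma)=((\Phi_\sigma)_*\xi)_z$ and the fact that every $W\in P_z(i)$ is a combination of such continued vectors. This requires extending the geodesic flow, its differential, and the vertical fields $\xi$ to the complexification of a neighborhood of the relevant geodesic segment, checking that the resulting holomorphic curve in $T_z^{\mathbb{C}}TM$ lies in the analytically continued subspace $P_z(\sigma)$ furnished by Theorem \ref{thm:acs existence}(1), and invoking Theorem \ref{thm:acs existence}(2) to ensure $P_z(i)$ still has dimension $n$ so that these vectors do span it. The remaining steps are routine once the transformation rule $\Phi_\sigma^*\Theta=\Theta+\sigma\,dE$ is in hand.
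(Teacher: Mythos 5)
Your proposal is correct and follows essentially the same route as the paper: the antiholomorphicity comes from the identity $\phi_{*}(P_{z}(\sigma))=P_{\phi(z)}(-\sigma)$ (which is the $c=-1$ case of the paper's scaling theorem) plus analytic continuation and self-conjugacy for real $\sigma$, while the K\"ahler-potential claim rests on exactly the paper's key lemma $\Theta(Z)=\sigma Z(E)$ for $Z\in P_{z}(\sigma)$, which you derive via the integrated pullback formula $\Phi_{\sigma}^{*}\Theta=\Theta+\sigma\,dE$ where the paper uses the equivalent infinitesimal transport equation $\partial u/\partial\sigma=-X_{E}u$ with $u(z,0)=0$. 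The continuation-of-frames issue you flag as the main obstacle is dealt with in the paper by the same local-holomorphic-frame device used in its integrability proof, so nothing essential is missing.
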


This result shows that the complex structure agrees with that of Guillemin--Stenzel. (See the theorem on p. 568 of \cite{Guillemin-Stenzel1}.)

Our next main result, from Section \ref{sec:Taylor series}, describes holomorphic functions on $T^{R}M$ in terms of the imaginary-time geodesic flow. Informally, it says that composition of a vertically constant function with the time-$i$ geodesic flow yields a holomorphic function. The first part of the Theorem is essentially a restatement of Proposition 3.2 of \cite{Lempert-Szoke}, whereas the expression (\ref{fc.series}) embodies the ``complexifier'' approach advocated by Thiemann \cite{Thiemann96,Thiemann1}, although Thiemann does not address the question of convergence.

\begin{theorem}
\label{function.thm}Let $R$ be as in Theorem \ref{thm:acs existence} and suppose a function $f:M\rightarrow\mathbb{C}$ admits an entire analytic continuation $f_{\mathbb{C}}$ to $T^{r}M$ for some $r\leq R$. Then for each $z\in T^{r}M$, the map
\[
\sigma\mapsto\left(  f\circ\pi\circ\Phi_{\sigma}\right)  (z)
\]
admits an analytic continuation in $\sigma$ from $\mathbb{R}$ to $\{\sigma+i\tau\in\mathbb{C}:\left\vert \tau\right\vert <r/\left\vert v\right\vert \}.$ Moreover, the function $f_{\mathbb{C}}$ is given by the value of the continuation at $\sigma=i$; this can be expressed informally as
\begin{equation}
f_{\mathbb{C}}(z)=\left(  f\circ\pi\circ\Phi_{i}(z)\right)  . \label{fc.form}
\end{equation}

Furthermore, $f_{\mathbb{C}}(z)$ can be expressed as an absolutely convergent series
\begin{equation}
f_{\mathbb{C}}(z)=\sum_{k=0}^{\infty}\frac{i^{k}}{k!}X_{E}^{k}(f\circ\pi)(z),\label{fc.series}
\end{equation}
where $X_{E}$ is the Hamiltonian vector field associated to $E.$ In this expansion, $(X_{E}^{k}f)(x,v)$ is a homogeneous polynomial of degree $k$ with respect to $v$ for each fixed $x\in M,$ so that (\ref{fc.series}) may be thought of as a ``Taylor series in the fibers.''
\end{theorem}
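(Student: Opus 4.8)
The plan is to exploit the fact that $E$ is homogeneous of degree $2$ in $v$, so that the geodesic flow scales nicely: $\Phi_\sigma(x, cv) = c\,\Phi_{c\sigma}(x,v)$ for $c > 0$. This lets me reduce everything to the unit sphere bundle and to the Lempert--Sz\H{o}ke map $\Psi_z$. Specifically, for $z = (x,v)$ with $|v| = 1$, observe that $\pi(\Phi_\sigma(z)) = \gamma_z(\sigma)$, so $f \circ \pi \circ \Phi_\sigma(z) = f(\gamma_z(\sigma)) = (f_{\mathbb C} \circ \Psi_z)(\sigma, 0)$. Since $\Psi_z$ is holomorphic from the strip $S_r$ into $T^r M$ (by the theorem asserting that $J$ is adapted) and $f_{\mathbb C}$ is holomorphic on $T^r M$, the composition $f_{\mathbb C} \circ \Psi_z$ is holomorphic on $S_r$. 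Restricting to the real axis gives $\sigma \mapsto f(\gamma_z(\sigma))$, whose holomorphic extension to the strip is precisely $\zeta \mapsto (f_{\mathbb C} \circ \Psi_z)(\zeta)$. Evaluating at $\zeta = i$ gives $(f_{\mathbb C}\circ\Psi_z)(0,1) = f_{\mathbb C}(\gamma_z(0), \dot\gamma_z(0)) = f_{\mathbb C}(x,v) = f_{\mathbb C}(z)$, which is exactly (\ref{fc.form}). For general $v \ne 0$, I write $v = |v|\hat v$ with $|\hat v| = 1$ and use the scaling relation: $\pi(\Phi_\sigma(x,v)) = \gamma_{(x,\hat v)}(|v|\sigma)$, so the extension in $\sigma$ is obtained by rescaling the strip $S_r$ by $1/|v|$, yielding the stated domain $\{|\tau| < r/|v|\}$, and $\sigma = i$ corresponds to $|v|\sigma = i|v|$, still inside the strip since $|v| < r$.

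Next I would establish the power series (\ref{fc.series}). The key point is that $X_E$ acts on a function of the form $h \circ \pi$ by differentiating along the geodesic flow: $(X_E(h\circ\pi))(z) = \frac{d}{d\sigma}\big|_{\sigma=0} h(\pi(\Phi_\sigma(z)))$, and iterating, $(X_E^k(f\circ\pi))(z) = \frac{d^k}{d\sigma^k}\big|_{\sigma = 0} f(\pi(\Phi_\sigma(z)))$. By the first part, the function $\sigma \mapsto f(\pi(\Phi_\sigma(z)))$ extends holomorphically to a disk of radius $r/|v| > 1$ about the origin (the strip contains such a disk), hence equals its convergent Taylor series there; evaluating that Taylor series at $\sigma = i$ gives $f_{\mathbb C}(z) = \sum_k \frac{i^k}{k!}(X_E^k(f\circ\pi))(z)$, with absolute convergence because $i$ lies strictly inside the disk of convergence. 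I should also note uniformity: a compactness argument on $\overline{T^{r'}M}$ for $r' < r$ gives a uniform radius of convergence bounded below by some $1 + \epsilon'$, so the series converges absolutely and uniformly on compact subsets, which is what is needed to conclude $f_{\mathbb C}$ is holomorphic (though holomorphy already follows from the first part).

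For the final claim — that $(X_E^k(f\circ\pi))(x,v)$ is a homogeneous polynomial of degree $k$ in $v$ — I would argue again from the homogeneity of $E$. Since $E$ is homogeneous of degree $2$, $X_E$ is homogeneous of degree $1$ under the fiber dilations $\delta_c(x,v) = (x,cv)$, i.e. $(\delta_c)_* X_E = c\, X_E \circ \delta_c$ appropriately interpreted; consequently $X_E$ maps functions homogeneous of degree $m$ in $v$ to functions homogeneous of degree $m+1$ in $v$. Since $f \circ \pi$ is homogeneous of degree $0$, $X_E^k(f\circ\pi)$ is homogeneous of degree $k$. That it is moreover polynomial (rather than merely homogeneous and smooth) follows because it is real-analytic near the zero section and homogeneous of integer degree $k$ in $v$: a real-analytic function homogeneous of degree $k$ must be a homogeneous polynomial of degree $k$. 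Real-analyticity of $X_E^k(f\circ\pi)$ near the zero section holds since $E$ is real-analytic (the metric is real-analytic) and $f$ is real-analytic, and $X_E$ preserves real-analyticity.

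The main obstacle I anticipate is the bookkeeping around the homogeneity/scaling relation for $\Phi_\sigma$ and the resulting domain of analytic continuation — in particular making the passage from the unit-vector case to general $v$ fully rigorous, and correctly tracking how the strip $S_r$ for $\Psi_z$ transforms under rescaling to yield the disk of radius $> 1$ needed for (\ref{fc.series}). Everything else is a fairly direct consequence of the holomorphy of $\Psi_z$ (already available from the adaptedness theorem) composed with $f_{\mathbb C}$, together with the elementary fact that iterated application of the generator of a flow recovers Taylor coefficients along the flow.
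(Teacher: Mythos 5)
Your proposal is correct and takes essentially the same route as the paper: the paper likewise obtains the continuation by composing $f_{\mathbb{C}}$ with the holomorphic map $\sigma+i\tau\mapsto N_{\tau}\Phi_{\sigma}(z)$ (which is exactly $\Psi_z$ up to the rescaling by $\left\vert v\right\vert$ that you describe), evaluates at $i$ to recover $f_{\mathbb{C}}(z)$, and identifies the series as the fiberwise Taylor expansion of this function, convergent on the disk of radius $r/\left\vert v\right\vert>1$ inside the strip. The only cosmetic difference is your homogeneity argument for $X_E^k(f\circ\pi)$ (fiber dilations plus real-analyticity) versus the paper's direct induction using that $E$ restricted to each fiber is a homogeneous degree-$2$ polynomial.
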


Recall that we are thinking of the Lagrangian distribution associated to the adapted complex structure as being obtained from the vertical distribution by means of the time-$i$ geodesic flow. Theorem \ref{function.thm} simply expresses this idea at the level of functions. The function $\psi:=f\circ\pi$ is constant along the leaves of the vertical distribution and the function $f_{\mathbb{C}}$ is ``constant along the leaves of the complex distribution'' associated to the adapted complex structure, meaning that the derivative of $f_{\mathbb{C}}$ is zero in the directions of the $(0,1)$ subspace. (It is customary, for example in geometric quantization, to take derivatives in the direction of the \textit{complex conjugate} of the subspaces $P_{z}$ of a Lagrangian distribution.) Theorem \ref{function.thm} says, informally, that $f_{\mathbb{C}}=\psi\circ\Phi_{i}.$

Note that $\pi(\Phi_{\sigma}(x,v))=\exp_{x}(v),$ where $\exp_{x}$ is the geometric exponential map. Thus, (\ref{fc.form}) is formally equivalent to the assertion that $f_{\mathbb{C}}(x,v)=f(\exp_{x}(iv)).$ This expression for $f_{\mathbb{C}}$ is already implicit in the work of Guillemin--Stenzel (Section 5 of \cite{Guillemin-Stenzel2}).

\section{Time-$i$ geodesic flow\label{sec:geod flow}}

In this section, we consider the imaginary-time geodesic flow as it acts on Lagrangian distributions. On the tangent bundle $TM$ (which is permanently identified with the cotangent bundle), we have the vertical distribution, consisting of the vertical subspace of the tangent space to $TM$ at each point. Since the geodesic flow is, for each time, a symplectomorphism, pushing forward the vertical distribution by the time-$\sigma$ geodesic flow gives another Lagrangian distribution. Thus, at each point $z\in TM$, we have a family $P_{z}(\sigma)$ of Lagrangian subspaces of $T_{z}^{\mathbb{C}}TM$ parameterized by the real number $\sigma.$ In this section we will show that for all $z$ in a neighborhood of the zero section, we can analytically continue this family to $\sigma=i.$

Furthermore, we will see that the family of subspaces $P_{z}(i)$ constitutes an involutive complex Lagrangian distribution on $T^{R}M$, and is also the distribution of $(1,0)$ vectors for a complex structure on $TM$ (that is, it intersects its complex conjugate only at $0$). It is not hard to see (Theorem \ref{adapted.thm}) that this complex structure is ``adapted'' in the sense of Lempert--Sz\H{o}ke. It can also be shown (Section \ref{kahler.sec}) that this complex structure satisfies the conditions of Guillemin--Stenzel (the theorem on p. 568 of \cite{Guillemin-Stenzel1}).

\subsection{Basic properties}

For each point $z\in TM,$ let $V_{z}\subset T_{z}^{\mathbb{C}}TM$ denote the \textit{complexification} of the vertical subspace of the tangent space.

\begin{definition}
For each fixed $\sigma\in\mathbb{R}$ and point $z\in TM,$ define the subspace $P_{z}(\sigma)\subset T_{z}^{\mathbb{C}}TM$ by
\begin{equation}
P_{z}(\sigma)=(\Phi_{\sigma})_{\ast}(V_{\Phi_{-\sigma}(z)}). \label{eqn:pushforwardV}
\end{equation}
\end{definition}

We consider the Lagrangian Grassmannian $\mathcal{L}_{z},$ consisting of all $n$-dimensional Lagrangian subspaces of $T_{z}^{\mathbb{C}}TM.$ This is a complex manifold. (It is a complex submanifold of the Grassmannian of all $n$-dimensional complex subspaces of $T_{z}^{\mathbb{C}}TM.$) We then wish to extend the map $\sigma\in\mathbb{R}\rightarrow P_{z}(\sigma)\in\mathcal{L}_{z}$ to a holomorphic map of some domain in $\mathbb{C}$ into $\mathcal{L}_{z}$ and, if possible, evaluate at $\sigma=i.$ To avoid ambiguity in the value of $P_{z}(\sigma)$ at $\sigma=i,$ we limit our attention to points $z$ such that the map $\sigma\rightarrow P_{z}(\sigma)$ can be analytically continued to a disk of radius greater than 1 about the origin.

We now develop the relevant properties of the subspaces $P_{z}(i).$ Most of these properties can be established either ``by analyticity'' (i.e., they hold for real values of $\sigma$ and then by analytic continuation for complex values of $\sigma$ as well) or ``by continuity'' (i.e., we can verify by direct computation that they hold on the zero section and thus by continuity they hold also on a neighborhood of the zero section).

\begin{theorem}
[Behavior on the zero section]\label{zerosec:thm}For a point $(x,0)$ in the zero section, identify $T_{(x,0)}TM$ with $T_{x}M\oplus T_{x}M$ (tangent space to the zero section direct sum tangent space to the fiber). Then for all $\sigma\in\mathbb{R},$ $(\Phi_{\sigma})_{\ast}$ evaluated at $(x,0)$ is the linear map represented by the block matrix
\[
\left(
\begin{array}
[c]{cc}%
I & \sigma I\\
0 & I
\end{array}
\right)  .
\]
\end{theorem}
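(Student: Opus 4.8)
The plan is to reduce the statement to a one‑line linear ODE computation, using the description of the geodesic flow as the flow of the second–order vector field $X_{E}$ in bundle coordinates. First I would fix local coordinates $x=(x^{1},\dots,x^{n})$ on a neighborhood of the given base point in $M$, with the induced coordinates $(x^{i},v^{i})$ on $TM$. In these coordinates the vertical subspace at every point is $\operatorname{span}\{\partial/\partial v^{i}\}$, while along the zero section the tangent space to the zero section is $\operatorname{span}\{\partial/\partial x^{i}\}$; hence the identification $T_{(x,0)}TM\cong T_{x}M\oplus T_{x}M$ used in the statement is precisely the coordinate splitting, and it suffices to compute $(\Phi_{\sigma})_{\ast}$ at $(x,0)$ as a $2n\times 2n$ block matrix in these coordinates.

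Next I would write $X_{E}$ in bundle coordinates as $X_{E}=\sum_{i}v^{i}\,\partial/\partial x^{i}-\sum_{k}\bigl(\sum_{i,j}\Gamma^{k}_{ij}(x)v^{i}v^{j}\bigr)\,\partial/\partial v^{k}$. Since the geodesic emanating from $(x,0)$ is the constant curve at $x$, each $\Phi_{\sigma}$ fixes the point $(x,0)$, so $(\Phi_{\sigma})_{\ast}$ at $(x,0)$ is the time-$\sigma$ flow of the linearization of $X_{E}$ at the equilibrium $(x,0)$ (the variational equation at a fixed point). The key observation is that the $v$-component of $X_{E}$ is quadratic in $v$ and therefore vanishes to second order along the zero section; consequently its derivative, with respect to both the $x^{i}$ and the $v^{i}$, vanishes at $(x,0)$, irrespective of the values of the Christoffel symbols there. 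Thus the linearization of $X_{E}$ at $(x,0)$ is the constant–coefficient linear vector field with block matrix $\bigl(\begin{smallmatrix}0&I\\0&0\end{smallmatrix}\bigr)$.

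Since this matrix is nilpotent with vanishing square, its exponential is $\exp\bigl(\sigma\bigl(\begin{smallmatrix}0&I\\0&0\end{smallmatrix}\bigr)\bigr)=\bigl(\begin{smallmatrix}I&\sigma I\\0&I\end{smallmatrix}\bigr)$, which is the asserted formula. Equivalently, organizing the same computation via Jacobi fields: $(\Phi_{\sigma})_{\ast}$ carries the initial data $(\xi,\eta)$ of a Jacobi field along $\gamma_{z}$ to its data $(Y(\sigma),\tfrac{D}{d\sigma}Y(\sigma))$ at time $\sigma$, and along the constant geodesic the Jacobi equation degenerates to $Y''=0$, so $Y(\sigma)=\xi+\sigma\eta$, giving the same block matrix.

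I do not expect a genuine obstacle here: this is a direct linearization. The only points requiring a little care are matching the bundle–coordinate splitting with the ``tangent to the zero section $\oplus$ tangent to the fiber'' splitting in the statement, and verifying that the quadratic–in–$v$ term really drops out of the linearization regardless of the choice of local coordinates (so that no appeal to normal coordinates is needed).
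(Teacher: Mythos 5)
Your proof is correct and follows essentially the same route as the paper: both fix the point $(x,0)$, observe that $(\Phi_{\sigma})_{\ast}$ there is the exponential of the linearization $A$ of $X_{E}$, note that the fiber component of $X_{E}$ is quadratic in the fiber variable and hence drops out of $A$ at $v=0$, and exponentiate the nilpotent block matrix. The only cosmetic difference is that you work in induced tangent-bundle coordinates $(x^{i},v^{i})$ where the upper-right block is literally $I$, whereas the paper works in the coordinates $(q^{j},p_{j})$ and tacitly uses the metric identification of the fiber with $T_{x}M$ to convert $g^{jk}$ into $I$.
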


\begin{proof}
The composition property of the flow $\Phi_{\sigma}$ implies that the restriction of $(\Phi_{\sigma})_{\ast}$ to the tangent space at a fixed point $z$ is a one-parameter group of linear transformations of $T_{z}TM.$ This one-parameter group is then the exponential of some linear transformation $A.$ The value of $A$ is computed by differentiating $\left.  (\Phi_{\sigma})_{\ast}\right\vert _{T_{z}TM}$ with respect to $\sigma$ and evaluating at $\sigma=0.$ Since $\Phi_{\sigma}$ is smooth as a map of $TM\times\mathbb{R}$ into $TM,$ we can interchange the derivative with respect to $\sigma$ with the derivative in the space variable that computes the differential $(\Phi_{\sigma})_{\ast}.$ Differentiating $\Phi_{\sigma}$ with respect to $\sigma$ gives the vector field $X_{E}.$ Thus, working in local coordinates, $A$ will be the matrix of derivatives of the vector field $X_{E}$ evaluated at $z.$

If $q_{1},\ldots,q_{n},$ $p_{1},\ldots,p_{n}$ are local coordinates of the usual sort\footnote{We use the following sort of local coordinates on the \textit{co}-tangent bundle $T^*M.$ We let $q^{1},\ldots ,q^{n}$ be local coordinates on $M,$ and then we express a point in $T^{\ast }M$ as
\[
p_{j}dq^{j}
\]
(sum convention). Then $q^{1},\ldots ,q^{n},p_{1},\ldots ,p_{n}$ are our local coordinates on $T^{\ast }M.$ (If the usual coordinates on the tangent bundle are $q^{1},\ldots ,q^{n},v^{1},\ldots ,v^{n}$, then it is possible to view the $p_{j}$'s as the functions on the tangent bundle given by $p_{j}=g_{jk}v^{k}.$) In a coordinate system of this type, the canonical 2-form and Poisson bracket have simple forms and the Hamiltonian vector field associated to a function $f$ is
\[
X_{f}=\frac{\partial f}{\partial p_{j}}\frac{\partial }{\partial q^{j}}-%
\frac{\partial f}{\partial q^{j}}\frac{\partial }{\partial p_{j}}.
\]} on $TM\cong T^*M,$ then since the energy function $E$, when transferred to the cotangent bundle using the metric, is given by $E=\tfrac12 g^{ij}p_ip_j,$ we obtain
\[
X_{E}(q,p)=g^{jk}(q)p_{j}\frac{\partial}{\partial q^{k}}-\frac12 \frac{\partial g^{jk}}{\partial q^{l}}p_{j}p_{k}\frac{\partial}{\partial p_{l}}
\]
(sum convention). Differentiating and evaluating at $(q,0),$ we can see that $A$ is the block matrix
$\left(
\begin{array}
[c]{cc}%
0 & I\\
0 & 0
\end{array}
\right)  $ and so $\exp(\sigma A)$ is the matrix in the statement of the theorem.
\end{proof}

We now turn to an important scaling property of the subspaces $P_{z}(\sigma).$ This property reflects that the energy function $E$ is homogeneous of degree 2 in each fiber.

\begin{theorem}[Scaling]\label{scaling.thm}
For $c\in\mathbb{R},$ let $N_{c}:TM\rightarrow TM$ denote the scaling map in the fibers: $N_{c}(x,v)=(x,cv).$ Then for all real values of $\sigma,$ all nonzero values of $c,$ and all $z\in TM,$ we have
\[
P_{N_{c}z}(\sigma)=(N_{c})_{\ast}(P_{z}(c\sigma)).
\]
It follows that if the map $\sigma\rightarrow P_{z}(\sigma)$ has a holomorphic extension to a disk of radius $R$ then the map $\sigma\rightarrow P_{N_{c}z}(\sigma)$ has a holomorphic extension to a disk of radius $R/c.$
\end{theorem}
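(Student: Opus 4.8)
The plan is to prove the scaling identity $P_{N_c z}(\sigma) = (N_c)_\ast(P_z(c\sigma))$ directly from the definition (\ref{eqn:pushforwardV}), using the single key fact that the scaling map $N_c$ intertwines the geodesic flows with a reparametrized time: namely
\[
\Phi_\sigma \circ N_c = N_c \circ \Phi_{c\sigma}.
\]
This conjugation relation is the geometric heart of the matter and reflects the degree-$2$ homogeneity of $E$ in the fibers: a geodesic with initial velocity $cv$ is, up to time reparametrization by $c$, the same geodesic as the one with initial velocity $v$. I would verify it either by the explicit formula $\Phi_\sigma(x,v) = \dot\gamma_{(x,v)}(\sigma)$ together with the elementary fact $\gamma_{(x,cv)}(\sigma) = \gamma_{(x,v)}(c\sigma)$ (whence $\dot\gamma_{(x,cv)}(\sigma) = c\,\dot\gamma_{(x,v)}(c\sigma)$, which is exactly $(N_c \circ \Phi_{c\sigma})(x,v)$), or equivalently by checking $N_c^\ast E = c^2 E$ and $N_c^\ast \omega = c\,\omega$, so that $(N_c)_\ast X_E = c^{-1} X_{N_c^\ast E}$... actually more cleanly: $(N_c^{-1})_\ast X_E$ relates to $X_E$ by the factor $c$, giving the stated conjugacy of flows upon exponentiating.

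Granting the conjugation relation, the computation is then formal. Fix $z\in TM$, $c\neq 0$, $\sigma\in\mathbb{R}$. By definition,
\[
P_{N_c z}(\sigma) = (\Phi_\sigma)_\ast\bigl(V_{\Phi_{-\sigma}(N_c z)}\bigr).
\]
Using $\Phi_{-\sigma}\circ N_c = N_c\circ\Phi_{-c\sigma}$, the base point is $\Phi_{-\sigma}(N_c z) = N_c(\Phi_{-c\sigma}(z))$. Next I need the observation that $N_c$ preserves the vertical distribution and that $(N_c)_\ast$ maps $V_w$ onto $V_{N_c w}$ — this is immediate since $N_c$ acts fiberwise and linearly on each fiber $T_xM$, so its differential carries vertical vectors to vertical vectors isomorphically. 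Therefore $V_{N_c(\Phi_{-c\sigma}(z))} = (N_c)_\ast\bigl(V_{\Phi_{-c\sigma}(z)}\bigr)$. Substituting and using the chain rule together with $\Phi_\sigma\circ N_c = N_c\circ\Phi_{c\sigma}$ once more,
\[
P_{N_c z}(\sigma) = (\Phi_\sigma)_\ast (N_c)_\ast\bigl(V_{\Phi_{-c\sigma}(z)}\bigr) = (N_c)_\ast (\Phi_{c\sigma})_\ast\bigl(V_{\Phi_{-c\sigma}(z)}\bigr) = (N_c)_\ast\bigl(P_z(c\sigma)\bigr),
\]
which is the desired identity. All of this extends to complexified tangent spaces verbatim, since $(N_c)_\ast$ and $(\Phi_\sigma)_\ast$ act on $T^{\mathbb{C}}TM$ by complex-linear extension.

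For the final assertion about holomorphic extensions: the map $c\mapsto (N_c)_\ast$ gives, for each fixed pair of points, a biholomorphism between the relevant Lagrangian Grassmannians (it is induced by a linear symplectomorphism, hence maps $\mathcal{L}$ to $\mathcal{L}$ holomorphically with holomorphic inverse $(N_{1/c})_\ast$). Thus if $\sigma\mapsto P_z(\sigma)$ has a holomorphic extension $F$ to the disk $D_R$, then $\sigma\mapsto (N_c)_\ast F(c\sigma)$ is holomorphic in $\sigma$ on the disk $D_{R/c}$ (where $c\sigma$ ranges over $D_R$), and it agrees with $P_{N_c z}(\sigma)$ for real $\sigma$ by the identity just proved; by uniqueness of analytic continuation it is the holomorphic extension of $\sigma\mapsto P_{N_c z}(\sigma)$. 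Hence that extension exists on $D_{R/c}$ as claimed. The main obstacle is really just pinning down the conjugation relation $\Phi_\sigma\circ N_c = N_c\circ\Phi_{c\sigma}$ cleanly and making sure the reparametrization factor lands on the correct side; everything downstream is bookkeeping with differentials and the elementary functoriality of analytic continuation.
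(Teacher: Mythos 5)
Your proposal is correct and follows essentially the same route as the paper: both rest on the conjugation relation $\Phi_{\sigma}\circ N_{c}=N_{c}\circ\Phi_{c\sigma}$ (equivalently $\Phi_{\sigma}=N_{c}\Phi_{c\sigma}N_{1/c}$), the fact that $(N_{c})_{\ast}$ preserves the vertical distribution, and then the chain rule applied to the definition of $P_{z}(\sigma)$. Your closing paragraph on transporting the holomorphic extension via $(N_{c})_{\ast}$ and uniqueness of analytic continuation is a correct filling-in of a step the paper leaves implicit.
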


\begin{proof}
We make use of the relation $\Phi_{\sigma}=N_{c}\Phi_{c\sigma}N_{1/c},$ which holds because the energy function $E$ is homogeneous of degree 2 in each fiber.

To compute $P_{N_{c}z}(\sigma),$ we take the vertical tangent space $V$ at the point $\Phi_{-\sigma}(N_{c}z)$ and push it forward by $(\Phi_{\sigma})_{\ast}.$ But $\,$
\[
(\Phi_{\sigma})_{\ast}=(N_{c})_{\ast}(\Phi_{c\sigma})_{\ast}(N_{1/c})_{\ast},
\]
and $(N_{1/c})_{\ast}$ maps the vertical subspace at $\Phi_{-\sigma}(N_{c}z)$ to the vertical subspace $V^{\prime}$ at $N_{1/c}(\Phi_{-\sigma}(N_{c}z))=\Phi_{-c\sigma}(z).$ Thus,
\begin{align*}
P_{N_{c}z}(\sigma) &  =(N_{c})_{\ast}(\Phi_{c\sigma})_{\ast}(V^{\prime})\\
&  =(N_{c})_{\ast}(P_{z}(c\sigma)).
\end{align*}
\end{proof}

\begin{lemma}[Analyticity]\label{lemma:analytic}
Let $\mathcal{L}$ denote the (complex) Lagrangian Grassmannian bundle over $TM$ (with fiber $\mathcal{L}_{z}$). The map $P:TM\times\mathbb{R}\rightarrow\mathcal{L}$ given by (\ref{eqn:pushforwardV}) is analytic.
\end{lemma}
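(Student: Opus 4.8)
The plan is to reduce everything to the real-analyticity of the geodesic flow $\Phi_\sigma$ as a function of $(z,\sigma)$, together with the fact that the passage from a symplectomorphism and a distinguished subspace to the pushed-forward subspace is an analytic operation on the Lagrangian Grassmannian bundle. First I would invoke the standard fact that, since $(M,g)$ is real-analytic, the geodesic vector field $X_E$ is a real-analytic vector field on $TM$, and hence its flow $\Phi:TM\times\mathbb{R}\rightarrow TM$, $(z,\sigma)\mapsto\Phi_\sigma(z)$, is real-analytic on its domain of definition (this is the analytic dependence of solutions of an analytic ODE on initial conditions and on the time parameter). Consequently the tangent map $(\Phi_\sigma)_\ast : T_zTM \to T_{\Phi_\sigma(z)}TM$, expressed in analytic local trivializations of $TTM$, is given by matrices whose entries are real-analytic functions of $(z,\sigma)$ — these entries are just the partial derivatives of the components of $\Phi$ with respect to the space variables, and differentiation preserves analyticity.

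Next I would set up local coordinates on the Lagrangian Grassmannian bundle $\mathcal{L}$. Fix a point $z_0$ and a small neighborhood $U$ together with an analytic local trivialization of $TTM$ over $U$ in which the vertical distribution $V$ is a fixed coordinate subspace (such a trivialization exists because $V$ is an analytic subbundle of $TTM$). On a coordinate chart of $\mathcal{L}_z$ near a given Lagrangian subspace, a nearby Lagrangian is the graph of a (symmetric, in appropriate coordinates) linear map, and these graph coordinates depend analytically on a presentation of the subspace by a spanning frame. Now for $(z,\sigma)$ near $(z_0,\sigma_0)$ with $z_0$ interior to the flow domain, $P_z(\sigma) = (\Phi_\sigma)_\ast\big(V_{\Phi_{-\sigma}(z)}\big)$ is spanned by the columns of the matrix obtained by multiplying the analytic matrix-valued function representing $(\Phi_\sigma)_\ast$ by the (constant, in our trivialization) frame for the vertical subspace at $\Phi_{-\sigma}(z)$; here $\Phi_{-\sigma}(z)$ itself depends analytically on $(z,\sigma)$. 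Thus $P_z(\sigma)$ is presented by an analytically varying frame, and composing with the analytic graph-coordinate charts on $\mathcal{L}$ shows that $(z,\sigma)\mapsto P_z(\sigma)$ is real-analytic. Since this is a local statement and the charts cover $\mathcal{L}$, analyticity holds on all of $TM\times\mathbb{R}$.

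The only genuine point requiring care — and hence the main obstacle — is the bookkeeping needed to see that the pushforward construction lands in the \emph{Lagrangian} Grassmannian and is analytic there, rather than merely in the full Grassmannian: one must check that the graph-type coordinate charts on $\mathcal{L}_z$ (adapted to the symplectic form, so that Lagrangians near a reference Lagrangian correspond to symmetric matrices) are compatible with the analytic structure on $TTM$, so that $\mathcal{L}$ is indeed an analytic fiber bundle and the frame-to-chart map is analytic. This is essentially a matter of observing that $\omega$ is an analytic symplectic form and that the standard description of $\mathcal{L}_z$ as a homogeneous space of the (real, and after complexification complex) symplectic group is analytic; once this is granted, everything else is routine composition of analytic maps. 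It is worth remarking that, because the flow map $\Phi$ extends complex-analytically in $\sigma$ to a neighborhood of $\mathbb{R}$ in $\mathbb{C}$ (again by analytic dependence of ODE solutions on a complex time parameter, wherever defined), the same argument simultaneously yields the holomorphic continuation of $\sigma\mapsto P_z(\sigma)$ that is needed in Theorem \ref{thm:continuation}; I would phrase the lemma's proof so that this extension is transparent.
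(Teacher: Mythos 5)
Your proposal is correct and follows essentially the same route as the paper: both arguments push forward an analytic local frame for the vertical distribution by the flow, invoke analytic dependence of the flow of the analytic vector field $X_E$ on time and initial conditions, and conclude that the resulting frame, hence the spanned subspace, varies analytically. The extra care you devote to the chart structure on the Lagrangian Grassmannian is harmless but not needed beyond the paper's observation that $\mathcal{L}_z$ is an (analytic, indeed complex) submanifold of the full Grassmannian, into which the map lands for real $\sigma$ because the flow is symplectic.
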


\begin{proof}
Choose a local analytic frame $v^{1},v^{2},\dots,v^{n}$ for the vertical distribution $V_{\cdot}$ over an open set $(\Phi_{-\sigma})(U)\subset TM$. Then $\{u^{j}(\sigma):=(\Phi_{\sigma})_{\ast}v^{j}\}$ is a local frame for $P(\sigma)$ over the open set $U\subset TM$. The geodesic flow is the Hamiltonian flow of the analytic function $E(x,v)=\frac{1}{2}g(v,v)$ and $TM$ is analytic, so by standard results for ordinary differential equations, the
geodesic flow $\Phi_{\sigma}$ depends analytically on $\sigma$ and on the initial conditions \cite[Prop 3.37]{Kohno}. (That is, $\Phi_{\sigma}$ is analytic as a map of $TM\times\mathbb{R}$ into $TM.$) It follows that the pushforward map $(\Phi_{\sigma})_{\ast}$ is also analytic, whence the frames $\{u_{z}^{j}(\sigma)\}$ depend analytically on $\sigma$ and $z$. The Lagrangian subspace spanned by these vectors then also depends analytically on $z$ and $\sigma.$ \end{proof}

\medskip

For each $z,$ the map $\sigma\rightarrow P_{z}(\sigma)$ has a holomorphic extension to \textit{some} disk. It then follows from the above scaling result that for all sufficiently small $c,$ the map $\sigma\rightarrow P_{N_{c}z}(\sigma)$ has a holomorphic extension to a disk of radius greater than 1. This observation is the key to the following existence result.

\begin{theorem}[Existence]\label{thm:continuation}
There exist $R>0$ and $\epsilon>0$ such that: (1) for each $z\in T^{R}M$ the map $\sigma\rightarrow P_{z}(\sigma)\in\mathcal{L}_{z}$ admits a holomorphic extension to a disk $D_{1+\epsilon}$ of radius $1+\epsilon$ in $\mathbb{C}$, and (2) the map $(z,\sigma)\mapsto P_z(\sigma)$ is real analytic as a map of $T^R M\times D_{1+\epsilon}$ into the Lagrangian Grassmannian bundle over $TM$.
\end{theorem}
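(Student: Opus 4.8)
The strategy combines three ingredients already in place: the joint real-analyticity of $(z,\sigma)\mapsto P_z(\sigma)$ from Lemma \ref{lemma:analytic}, the compactness of the zero section (valid since $M$ is compact), and the scaling relation $P_{N_cz}(\sigma)=(N_c)_\ast(P_z(c\sigma))$ of Theorem \ref{scaling.thm}. The last of these is the decisive tool: it will let us trade a holomorphic extension of fixed radius $\rho$ on a neighborhood of the zero section for an extension of radius larger than $1$ on a sufficiently thin tube, as anticipated by the remark following Lemma \ref{lemma:analytic}.

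\emph{Uniform radius near the zero section.} First I would use real-analyticity to produce, about each point $(x,0)$ of the zero section, a joint holomorphic extension of $P$ to the product of a complex neighborhood of $(x,0)$ with a complex disk about the origin in $\sigma$. Keeping the $z$-variable real and retaining only the $\sigma$-disk yields an open set $V_x\subset TM$ containing $(x,0)$ and a radius $b_x>0$ such that $\sigma\mapsto P_z(\sigma)$ extends holomorphically to $D_{b_x}$ for every $z\in V_x$, with the extension real-analytic on $V_x\times D_{b_x}$; two such extensions agree wherever both are defined, since they agree for real $\sigma$. By compactness, finitely many $V_{x_1},\dots,V_{x_k}$ cover the zero section; setting $U_0:=\bigcup_j V_{x_j}$ and $\rho:=\min_j b_{x_j}>0$, we obtain a real-analytic map $(z,\sigma)\mapsto P_z(\sigma)$ on $U_0\times D_\rho$ that is holomorphic in $\sigma$ for each fixed $z$, where $U_0$ is an open neighborhood of the zero section and hence contains a tube $T^{r_0}M$ for some $r_0>0$. (Consistently with Theorem \ref{zerosec:thm}, the extension is in fact entire on the zero section.)

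\emph{Scaling and joint analyticity.} Now fix $\epsilon>0$ (say $\epsilon=1$) and set $R:=r_0\rho/(1+\epsilon)$. For $z=(x,v)\in T^RM$ choose $c$ with $|v|/r_0<c<\rho/(1+\epsilon)$; this interval is nonempty precisely because $|v|<R$ and contains small positive numbers, so the choice is possible even when $v=0$. Then $N_{1/c}z=(x,v/c)\in T^{r_0}M\subset U_0$, so by the previous step $\sigma\mapsto(N_c)_\ast P_{N_{1/c}z}(c\sigma)$ is holomorphic on the disk of radius $\rho/c>1+\epsilon$ and, by Theorem \ref{scaling.thm}, equals $P_z(\sigma)$ for real $\sigma$; this is the required holomorphic extension, and since the Lagrangian Grassmannian $\mathcal{L}_z$ is a complex submanifold of the full Grassmannian cut out locally by holomorphic equations that already hold for real $\sigma$, the extension is $\mathcal{L}_z$-valued. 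That proves (1). For (2), near any $z_1\in T^RM$ one fixes a single $c_1$ from the same interval and shrinks to a neighborhood $W_1$ of $z_1$ in $T^RM$ on which $N_{1/c_1}z\in T^{r_0}M$ (so that $c_1\sigma\in D_\rho$ for $\sigma\in D_{1+\epsilon}$); on $W_1\times D_{1+\epsilon}$ the extension is given by the single formula $(z,\sigma)\mapsto(N_{c_1})_\ast P_{N_{1/c_1}z}(c_1\sigma)$, which is real-analytic by the previous step and the analytic dependence of the fixed linear isomorphism $(N_{c_1})_\ast$, and which is independent of the choice of $c_1$ (being forced on the real axis). These local descriptions patch to a real-analytic map from $T^RM\times D_{1+\epsilon}$ into the Lagrangian Grassmannian bundle, giving (2).

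\emph{Main obstacle.} The only step that is not essentially formal is obtaining a \emph{single} extension radius $\rho>0$ valid on an entire tube rather than merely pointwise on the zero section, together with the care needed to complexify $\sigma$ while holding $z$ real and to keep the continuation inside $\mathcal{L}_z$. Once that is done, the scaling relation of Theorem \ref{scaling.thm} does essentially all of the remaining work.
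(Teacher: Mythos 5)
Your proposal is correct and follows essentially the same route as the paper: obtain a locally uniform radius of holomorphic extension in $\sigma$ from the joint real-analyticity of Lemma \ref{lemma:analytic}, use compactness to make that radius uniform on a compact region (you use the zero section, the paper uses the unit tube $T^{1}M$), and then apply the scaling relation of Theorem \ref{scaling.thm} to convert this into a radius exceeding $1$ on a sufficiently thin tube. The minor difference in the choice of compact set is immaterial.
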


\begin{proof}
In an analytic local coordinate system on $TM$ with origin at some $z_{0}$, we have the associated analytic local trivialization of the tangent bundle of $TM.$ Then we may think of the map $(z,\sigma)\rightarrow P_{z}(\sigma)$ as a map of $(U\subset\mathbb{R}^{2n})\times\mathbb{R}$ into the Grassmannian of $n$-dimensional complex subspaces of $\mathbb{C}^{2n}.$ By the lemma, this map is analytic. There exists, then, an open set $V$ in $\mathbb{C}^{2n}\times\mathbb{C}$ containing $U\times\mathbb{R}$ to which this map has a holomorphic extension. The set $V$ contains a set of the form $W_{1}\times W_{2},$ where $W_{1}$ is a neighborhood of the origin in $\mathbb{C}^{2n}$ and $W_{2}$ is a disk of some radius $R>0$ around the origin in $\mathbb{C}.$ This shows that there exists $R>0$ such that for all $z$ in some neighborhood of $z_{0},$ the map $\sigma\rightarrow P_{z}(\sigma)$ has a holomorphic extension to a disk of radius $R$ in $\mathbb{C},$ and the map $(z,\sigma)\mapsto P_z(\sigma)$ is real analytic. This amounts to saying that the maximum radius of such extensions is locally bounded away from zero.

Since $T^1 M$ is compact, the above argument shows that there is some minimum radius $\eta$ such that $P$ is defined and real analytic on $T^1 M\times D_\eta.$ For any $R$ with $0<R<\eta$, choose $\epsilon>0$ such that $R/\eta=1+\epsilon.$ Then by Theorem \ref{scaling.thm}, we obtain a map $P$ from $T^R M\times D_{1+\epsilon}$ into the Lagrangian Grassmannian as desired.
\end{proof}

\begin{theorem}[Integrability]\label{thm:integrable}
Suppose $R>0$ is such that there exists $\epsilon>0$ for which (1) for all $z\in T^R M$, the map $\sigma\rightarrow P_{z}(\sigma)\in\mathcal{L}_{z}$ has a holomorphic extension to a disk $D_{1+\epsilon}$, and (2) the map $(z,\sigma)\mapsto P_z(\sigma)$ is smooth as a map of $T^R M\times D_{1+\epsilon}$ into the Lagrangian Grassmannian bundle over $TM$. Then $P(i)$ is an integrable distribution on $T^{R}M.$
\end{theorem}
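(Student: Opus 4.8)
The plan is to prove integrability of the distribution $P(i)$ by reducing to the well-known fact that, for each real $\sigma$, the distribution $P(\sigma)$ is integrable (being the pushforward of the vertical distribution by the symplectomorphism $\Phi_\sigma$), and then propagating this fact to $\sigma = i$ by analytic continuation. The key observation is that involutivity can be phrased as the vanishing of a tensorial quantity---the obstruction to closure under Lie bracket---which depends analytically on $\sigma$ once we have analyticity of the distribution $P(\cdot)$ itself (Lemma \ref{lemma:analytic}, Theorem \ref{thm:continuation}).

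Concretely, I would work in an analytic local coordinate system on $T^R M$ together with the induced analytic trivialization of $T^{\mathbb{C}}TM$, so that a neighborhood of any point looks like $U \subset \mathbb{R}^{2n}$ with complexified tangent spaces all identified with $\mathbb{C}^{2n}$. Since $P_z(\sigma)$ varies holomorphically in $\sigma$ on $D_{1+\epsilon}$ and smoothly (hence, by Theorem \ref{thm:continuation}, real-analytically) in $z$, I can choose a local frame $u^1_z(\sigma), \dots, u^n_z(\sigma)$ for $P_z(\sigma)$ that depends holomorphically on $\sigma$ and real-analytically on $z$ --- e.g. by pushing forward a fixed analytic vertical frame by $(\Phi_\sigma)_*$ and continuing, as in the proof of Lemma \ref{lemma:analytic}, or by a Gram--Schmidt-type normalization to put the frame in a standard ``graph'' form over a fixed $n$-dimensional coordinate subspace, which is possible near any point after shrinking. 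The Lie bracket $[u^a(\sigma), u^b(\sigma)]$ is then a section of $\mathbb{C}^{2n}$ depending real-analytically on $z$ and holomorphically on $\sigma$. Let $\pi_z(\sigma) : \mathbb{C}^{2n} \to \mathbb{C}^{2n}/P_z(\sigma)$ be the quotient projection; working in the graph presentation one can make $\pi_z(\sigma)$ itself depend holomorphically on $\sigma$ and real-analytically on $z$. The Frobenius obstruction is the collection of quantities
\[
c^{ab}_z(\sigma) := \pi_z(\sigma)\bigl( [u^a(\sigma), u^b(\sigma)]_z \bigr),
\]
and $P(\sigma)$ is involutive on a coordinate patch precisely when $c^{ab}_z(\sigma) \equiv 0$ there. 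Each $c^{ab}_z(\cdot)$ is, for fixed $z$, a holomorphic function of $\sigma$ on $D_{1+\epsilon}$, and it vanishes identically for $\sigma$ real in a neighborhood of $0$ (since $P(\sigma)$ is integrable for real $\sigma$). By the identity theorem, $c^{ab}_z(\sigma) \equiv 0$ for all $\sigma \in D_{1+\epsilon}$, in particular at $\sigma = i$. Thus $P(i)$ is involutive on each coordinate patch; since involutivity is a local condition, $P(i)$ is integrable on all of $T^R M$.

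The main obstacle, and the one point that needs genuine care, is making the Frobenius obstruction literally an \emph{honest holomorphic function of} $\sigma$ rather than merely ``something that vanishes.'' The subtlety is that a naive frame $(\Phi_\sigma)_* v^j$ need not stay in a convenient standard form as $\sigma$ becomes complex, and the quotient space $\mathbb{C}^{2n}/P_z(\sigma)$ moves with $\sigma$; so one must choose the frame and the complement consistently. The clean fix is: near a fixed point $z_0$, after possibly permuting coordinates, write $P_{z_0}(0)$ (the vertical space) as a graph over the ``fiber'' coordinate subspace, note this graph presentation persists for $(z,\sigma)$ in a neighborhood by continuity, use the corresponding ``graph frame'' $u^j_z(\sigma) = \partial_{p_j} + \sum_k A^{jk}_z(\sigma)\,\partial_{q^k}$ with $A$ holomorphic in $\sigma$ and real-analytic in $z$, and take the complement to be the fixed coordinate subspace spanned by the $\partial_{q^k}$, so that $\pi_z(\sigma)$ is the constant projection onto that subspace. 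Then $c^{ab}_z(\sigma)$ is manifestly a holomorphic function of $\sigma$ for each $z$, and the identity-theorem argument goes through verbatim. One then covers $T^R M$ by finitely many (or countably many) such patches; integrability being local, the argument is complete. A brief remark should note that although $P(\sigma)$ for complex $\sigma$ is a \emph{complex} distribution, the Frobenius/integrability notion we need is the formal one (closure under Lie bracket of local sections), which is exactly what the vanishing of $c^{ab}$ encodes; the compatibility of this with an actual complex structure is the content of the separate Theorem \ref{thm:pos Lagr}.
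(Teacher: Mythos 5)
Your overall strategy coincides with the paper's: encode involutivity of $P(\sigma)$ in the vanishing of an obstruction built from a local frame, observe that it vanishes for real $\sigma$, and propagate to $\sigma=i$ by the identity theorem. The gap is in the step where you declare $c^{ab}_z(\cdot)$ to be ``a holomorphic function of $\sigma$ on $D_{1+\epsilon}$.'' Your frame --- whether the graph frame $\partial_{p_j}+\sum_k A^{jk}_z(\sigma)\,\partial_{q^k}$ or the analytic continuation of the pushed-forward vertical frame --- is only defined on the open set of those $\sigma$ for which $P_z(\sigma)$ lies in the chosen affine chart of the Grassmannian (equivalently, is transverse to the fixed complement $\mathrm{span}\{\partial_{q^k}\}$). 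By continuity that set contains a neighborhood of $\sigma=0$, as you note, but the Grassmannian is not covered by a single such chart, and nothing in the hypotheses guarantees that the chart containing $P_z(0)$ also contains $P_z(\sigma)$ for every $\sigma\in D_{1+\epsilon}$, or even for every $\sigma$ on the segment from $0$ to $i$. So $c^{ab}_z$ is holomorphic only on some open set containing $0$, which need not contain $i$ and need not be connected to the real axis in a way that lets the identity theorem carry the vanishing to $i$. (The hypotheses also only give continuation of the Grassmannian-valued map, not of any particular frame vectors, which is another reason the frame must be manufactured chartwise.)

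The missing idea is a continuation argument along the segment $[0,i]$ that re-chooses the chart as it goes. The paper does this by letting $\alpha_0$ be the infimum of $\alpha\in[0,1]$ at which involutivity fails at $i\alpha$: a frame adapted to $P_{z_0}(0)$, obtained from a local holomorphic section of $GL(2n,\mathbb{C})\rightarrow\mathcal{K}_n$ applied to a fixed basis, shows the wedge obstruction $[u^j,u^k]\wedge u^1\wedge\cdots\wedge u^n$ vanishes for $\sigma$ near $0$, so $\alpha_0>0$; then a new frame adapted to $P_{z_0}(i\alpha_0)$ is holomorphic in $\sigma$ near $i\alpha_0$, its obstruction vanishes for $\alpha<\alpha_0$ and hence, by holomorphy, at and beyond $i\alpha_0$, contradicting the definition of $\alpha_0$. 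Your argument becomes correct once you replace the single global chart by this (or an equivalent open-and-closed) patching along the segment; everything else in your write-up, including the reduction to a tensorial Frobenius obstruction and the remark separating formal involutivity from the complex structure of Theorem \ref{thm:pos Lagr}, is sound.
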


\begin{proof}
Let $\mathcal{K}_{n}$ denote the set of $n$-dimensional complex subspaces of $\mathbb{C}^{2n}.$ The group $GL(2n,\mathbb{C})$ acts transitively on $\mathcal{K}_{n}$ and the stabilizer subgroup of a point in $\mathcal{K}_{n}$ is isomorphic to the group $H$ of $2n\times2n$ matrices for which the lower left $n\times n$ block is zero. Because $H$ is a closed complex subgroup, $GL(2n,\mathbb{C})$ is a holomorphic fiber bundle over $\mathcal{K}_{n}$ with fiber $H.$ We can pick a local holomorphic section in a neighborhood of any point. This means that for $V_{0}\in\mathcal{K}_{n},$ there is a neighborhood $U$ of $V_{0}$ and a map $g(\cdot)$ from $U$ into $GL(2n,\mathbb{C})$ such that $V=g(V)(V_{0})$ for all $V\in U.$

Suppose $P(i)$ is not integrable, which means that there exist $z_{0}\in T^{R}M$ and vector fields $X$ and $Y$ lying in $P(i)$ with $[X,Y]_{z_{0}}\not \in P_{z_{0}}(i)$. Let $\alpha_{0}$ be the infimum of the set of $\alpha$ in $[0,1]$ for which there exist vector fields $X$ and $Y$ with $[X,Y]\not \in P_{z_{0}}(i\alpha)$.

Choose a local trivialization of $T^{\mathbb{C}}TM$, so that $P_{z}(\sigma)$ may be thought of as a subspace of $\mathbb{C}^{2n}$ which depends analytically on $(z,\sigma)$, and let $V_{0}:=P_{z_{0}}(0)=V_{z_{0}}^{\mathbb{C}}TM$. For $z$ in some neighborhood of $z_{0}$ and for $\sigma$ in some neighborhood of $0$, $P_{z}(\sigma)$ will belong to the domain of the function $g$ in the first paragraph. Choose a basis $u^{1},\dots,u^{n}$ of $P_{z_{0}}(0)$ and set $u^{j}(z,\sigma):=g(P_{z}(\sigma))u^{j}$ for $j=1,\dots,n$. Then for each $j$, $u^{j}(z,\sigma)$ is real analytic in $z$ and $\sigma$, and $\{u^{j}(z,\sigma)\}_{j=1,\dots,n}$ is a basis for $P_{z}(\sigma)$. Hence, for each $1\leq j,k\leq n$,
\[
\lbrack u^{j}(z,\sigma),u^{k}(z,\sigma)]\wedge u^{1}(z,\sigma)\wedge \cdots\wedge u^{n}(z,\sigma)
\]
is also real analytic. When $\sigma$ is real, $P_{z}(\sigma)$ is involutive since it is the image under a diffeomorphism of an involutive distribution, whence the above quantity is zero. We conclude then that the above quantity, for each $1\leq j,k\leq n$, is zero for $\sigma$ in a neighborhood of $0$ in $\mathbb{C}$; that is, that $P_{z}(i\alpha)$ is involutive for $\alpha$ near $0$, so that $\alpha_{0}$ is necessarily positive.

Next, we repeat the above argument, but centered on $(z_{0},i\alpha_{0})$. That is, for $z$ in some neighborhood of $z_{0}$ and for $\sigma$ in a neighborhood of $i\alpha_{0}$, the family of subspaces $P_{z}(\sigma)$ (still working in the real-analytic trivialization of $T^{\mathbb{C}}TM$) will be in the domain of the function $g$ of the first paragraph. Choose a basis $u^{1},\dots,u^{n}$ of $V_{0}:=P_{z_{0}}(i\alpha_{0})$ and define
$u^{j}(z,\sigma):=g(P_{z}(\sigma))u^{j}.$ Then by the definition of $\alpha_{0}$,
\begin{equation}
\lbrack u^{j}(z,i\alpha),u^{k}(z,i\alpha)]\wedge u^{1}(z,i\alpha)\wedge\cdots\wedge u^{n}(z,i\alpha)=0\label{eqn:int-wedge}
\end{equation}
for each $\alpha<a_{0}$ and for each $1\leq j,k\leq n$. Since $u^{j}(z,\sigma)$ is holomorphic in $\sigma$, the above equality must also be true for $\alpha=\alpha_{0}.$ Since $\{u^{j}(z,i\alpha_{0})\}_{j=1,\dots,n}$ is a basis for $V_{0},$ we see that by taking linear combinations of (\ref{eqn:int-wedge}), we obtain
\[
\lbrack X,Y]\wedge u^{1}(z,i\alpha)\wedge\cdots\wedge u^{n}(z,i\alpha)=0
\]
for all $X,Y\in P_{z_{0}}(i\alpha_{0})$, which contradicts the definition of $\alpha_{0}.$
\end{proof}

\begin{theorem}
[K\"{a}hler structure]\label{thm:pos Lagr}In a neighborhood of the zero section, $P_{z}(i)$ intersects its complex conjugate only at zero. Thus there is some $R^{\prime}>0$ (possibly smaller than the radius in the existence theorem) such that for $z\in T^{R^{\prime}}M$, the distribution $P_{z}(i)$ is the $(1,0)$-tangent space of a complex structure.
\par
Moreover, there exists some $R\in(0,R^{\prime}]$ such that for $z\in T^{R}M$, the distribution $P_{z}(i)$ is positive with respect to the canonical symplectic form, that is, $-i\omega(Z,\overline{Z})>0$ for every $Z\in P_{z}(i)$. Hence, $P_{z}(i)$ defines a K\"{a}hler structure on $T^{R}M$.
\end{theorem}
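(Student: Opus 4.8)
The plan is to verify both positivity statements on the zero section by direct computation, then extend to a tube by continuity. First I would compute $P_{(x,0)}(i)$ explicitly. By Theorem \ref{zerosec:thm}, $(\Phi_\sigma)_\ast$ at $(x,0)$ is the block matrix $\left(\begin{smallmatrix} I & \sigma I\\ 0 & I\end{smallmatrix}\right)$ acting on $T_xM\oplus T_xM$, so pushing forward the vertical subspace $\{0\}\oplus T_xM$ gives $P_{(x,0)}(\sigma)=\{(\sigma w,w): w\in T_xM\}$. This formula, initially for real $\sigma$, extends holomorphically (it is manifestly holomorphic in $\sigma$), so $P_{(x,0)}(i)=\{(iw,w):w\in T_x^{\mathbb{C}}M\}$. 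Its complex conjugate is $\{(-i\bar w,\bar w)\}=\{(-iw,w):w\}$, and $(iw,w)=(-iw',w')$ forces $w=w'$ and $iw=-iw$, hence $w=0$; so the intersection is trivial on the zero section. Since the condition ``$P_z(i)\cap\overline{P_z(i)}=\{0\}$'' is open — it amounts to the non-vanishing of a certain determinant built from a smooth local frame for $P_z(i)$ (which exists and depends smoothly on $z$ by hypothesis (2) of the previous theorems) together with its conjugate — it persists on some tube $T^{R'}M$. On such a tube, a distribution of $n$-dimensional complex subspaces satisfying this condition together with integrability (Theorem \ref{thm:integrable}) and Lagrangianness is exactly the $(1,0)$-distribution of an integrable almost complex structure $J$, by the Newlander–Nirenberg theorem; this defines the complex structure.

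Next I would check the positivity (Kähler) condition $-i\omega(Z,\overline Z)>0$ for $0\neq Z\in P_z(i)$, again first on the zero section. Using the local coordinates of the footnote, $\omega=dq^j\wedge dp_j$, so for $Z=(iw,w)$ and $\overline Z=(-i\bar w,\bar w)$ — writing $Z$ in the $(q,p)$ splitting as $Z_q=iw$, $Z_p=w$ — one computes $\omega(Z,\overline Z)=Z_q\cdot\overline{Z_p}-\overline{Z_q}\cdot Z_p$ where the dot is the pairing coming from the metric at $x$. Since the metric identification is what turns the first slot into $T_xM$ and the second into $T_x^\ast M$, this is $g_x(iw,\bar w)-g_x(-i\bar w,w)=i\,g_x(w,\bar w)+i\,g_x(\bar w,w)=2i\,\|w\|^2_{g_x}$ (using bilinearity and that $g_x$ is symmetric, so $g_x(w,\bar w)=\overline{g_x(\bar w,w)}$ is real and equals $\|w\|^2$). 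Hence $-i\omega(Z,\overline Z)=2\|w\|^2>0$ for $w\neq 0$. Positivity, like the previous condition, is an open condition on $z$ — it is the positive-definiteness of the Hermitian form $Z\mapsto -i\omega(Z,\overline Z)$ restricted to $P_z(i)$, whose matrix in a smooth local frame depends continuously on $z$ — so it holds on a (possibly smaller) tube $T^{R}M$, $R\le R'$. Finally, a symplectic form together with an integrable complex structure whose $(1,0)$-space is Lagrangian and positive in this sense is precisely a (pseudo-)Kähler structure, and positivity makes it genuinely Kähler; this is standard.

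The main obstacle, such as it is, is bookkeeping rather than conceptual: one must be careful about the two conventions for the metric identification of $TM$ with $T^\ast M$ and about the resulting sign of $\omega$ (recall $\omega=-d\Theta$ in the paper's convention), so that the Hermitian form comes out with the sign claimed rather than its negative. I would nail this down by doing the computation once, very explicitly, at a single point $x$ in normal coordinates — where $g_{jk}(x)=\delta_{jk}$ and the formula $P_{(x,0)}(i)=\{(iw,w)\}$ holds with the naive Euclidean pairing — and then invoke the coordinate-independence of $\omega$ and of the whole construction to conclude in general. One small additional point worth stating carefully is that ``the distribution $P_z(i)$ defines a complex structure'' requires $P_z(i)$ to be a genuine complex subspace (i.e.\ not just $n$-dimensional over $\mathbb{C}$ but such that $T_z^{\mathbb{C}}TM = P_z(i)\oplus\overline{P_z(i)}$); the trivial-intersection statement together with the dimension count $n+n=2n$ gives exactly this, so no separate argument is needed there.
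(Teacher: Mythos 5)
Your proposal is correct and follows essentially the same route as the paper: verify both the trivial-intersection and positivity conditions on the zero section (where Theorem \ref{zerosec:thm} shows $P_{(x,0)}(i)=\{(iw,w)\}$ is the standard Euclidean $(1,0)$-space), then propagate both open conditions to a tube by continuity of a local frame and compactness of $M$. Your version is in fact slightly more careful than the paper's at one point --- you phrase the openness of $P_z(i)\cap\overline{P_z(i)}=\{0\}$ as non-vanishing of the determinant of the combined frame $\{X_j,\overline{X}_j\}$, which is the correct criterion, whereas the paper only cites $X_j-\overline{X}_j\neq 0$ --- but this is a refinement of the same argument, not a different one.
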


\begin{proof}
Choose a local analytic frame $\{X_{j}\}_{j=1}^{n}$ of $P(i)$ over an open set $U_{\alpha}\subset TM$ which intersects the zero section in the (nonempty, open in $M$) set $U_{0}=M\cap U_{\alpha}$. On the zero section, we know that $P(i)$ looks like the Euclidean $(1,0)$-bundle; in particular, $X_{j}(p)-\overline{X}_{j}(p)\neq0$ for every $p\in U_{0}$. Since the vector fields $X_{j}-\overline{X}_{j}$ are analytic (hence continuous), it follows that $X_{j}(p)-\overline{X}_{j}(p)\neq0$ for all $p$ in some open (in $TM$) neighborhood $V$ with $U_{0}\subset V\subset U_{\alpha}.$

Do this for each open set $U_{\alpha}$ intersecting the zero section, then take a maximal tube inside the union of the $V$s (we only need to consider a finite set of $U_{\alpha}$s, and hence of $V$s, since $M$ is compact) to obtain $T^{R_{0}}M$.

To see that $P_{z}(i)$ is positive, we work in the same local trivialization. On the zero section, the distribution $P_{(x,0)}(i)$ is just that of the standard Euclidean complex structure on $\mathbb{R}^{2n}$, so that $-i\,\omega_{(x,0)}(X_{j},\overline{X}_{j})>0.$ But $-i\,\omega_{z}(X_{j},\overline{X}_{j})$ is a continuous (in fact analytic) function of $z$, so it must be positive in a neighborhood of the zero section. Again, by the compactness of $M$ we obtain a neighborhood of the zero section on which $P_{z}(i)$ is positive. Combined with the fact that the distribution $P(i)$ is Lagrangian, this shows that the complex structure defined by the distribution $P(i)$ is $\omega$-compatible, whence $T^{R}M$ is K\"{a}hler.
\end{proof}

\begin{theorem}
[Adaptedness]\label{adapted.thm}Let $R$ be any positive number such that $P_{z}(i)$ exists (in the sense of Theorem \ref{thm:continuation}) for all $z\in T^{R}M$ and has trivial intersection with its complex conjugate. Then the associated complex structure is ``adapted'' in the sense of Lempert--Sz\H{o}ke (Definition 4.1 in \cite{Lempert-Szoke}).
\end{theorem}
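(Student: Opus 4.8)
The plan is to reduce the adaptedness condition to a single identity relating two natural vector fields on $TM$, and then to prove that identity by analytic continuation in $\sigma$ from the real axis to $\sigma=i$. Let $\mathcal{E}$ denote the vertical vector field on $TM$ given by $\mathcal{E}_{(x,u)}=\frac{d}{dt}\Big|_{t=0}(x,(1+t)u)$, that is, the vertical lift of the tautological section (the ``fiberwise Euler field''). First note that $\Psi_{z}(\sigma,\tau)=N_{\tau}(\Phi_{\sigma}(z))$; since the geodesic determined by a unit vector has unit speed, $\Psi_{z}$ indeed maps $S_{R}$ into $T^{R}M$. A smooth map of a planar domain into $(T^{R}M,J)$ is holomorphic precisely when its complexified differential carries the $(1,0)$-vector $\partial_{\sigma}-i\,\partial_{\tau}$ (with $w=\sigma+i\tau$) into the $(1,0)$-subspace, which is $P_{q}(i)$ by the definition of $J$. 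Using $\Psi_{z}=N_{\tau}\circ\Phi_{\sigma}$ together with the scaling relation $\Phi_{\sigma}N_{c}=N_{c}\Phi_{c\sigma}$ from the proof of Theorem~\ref{scaling.thm}, a short chain-rule computation shows that at $q:=\Psi_{z}(\sigma,\tau)$ with $\tau\neq 0$,
\[
(\Psi_{z})_{\ast}\Big(\frac{\partial}{\partial\sigma}\Big)=\frac{1}{\tau}\,X_{E}(q),\qquad (\Psi_{z})_{\ast}\Big(\frac{\partial}{\partial\tau}\Big)=\frac{1}{\tau}\,\mathcal{E}_{q},
\]
so that $(\Psi_{z})_{\ast}(\partial_{\sigma}-i\,\partial_{\tau})=\tfrac{1}{\tau}\big(X_{E}(q)-i\,\mathcal{E}_{q}\big)=\tfrac{-i}{\tau}\big(\mathcal{E}_{q}+i\,X_{E}(q)\big)$. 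Thus adaptedness will follow once we know that $\mathcal{E}_{q}+i\,X_{E}(q)\in P_{q}(i)$ for every $q\in T^{R}M$ off the zero section.

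The heart of the argument is the claim that, for every $q\in T^{R}M$ and every $\sigma$ in the disk $D_{1+\epsilon}$ of Theorem~\ref{thm:continuation},
\[
\mathcal{E}_{q}+\sigma\,X_{E}(q)\in P_{q}(\sigma).
\]
For real $\sigma$ this is elementary: $\mathcal{E}_{\Phi_{-\sigma}(q)}$ lies in the vertical space $V_{\Phi_{-\sigma}(q)}$, hence $(\Phi_{\sigma})_{\ast}\mathcal{E}_{\Phi_{-\sigma}(q)}$ lies in $P_{q}(\sigma)=(\Phi_{\sigma})_{\ast}V_{\Phi_{-\sigma}(q)}$; and writing $\Phi_{\sigma}N_{1+t}=N_{1+t}\Phi_{(1+t)\sigma}$ and differentiating $t\mapsto N_{1+t}\Phi_{t\sigma}(q)$ at $t=0$ identifies this pushforward as exactly $\mathcal{E}_{q}+\sigma\,X_{E}(q)$. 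To pass to complex $\sigma$, fix $q$ and choose a local holomorphic frame $u^{1}(\sigma),\dots,u^{n}(\sigma)$ of $P_{q}(\sigma)$ near a point of $D_{1+\epsilon}$ (such frames exist because the Lagrangian Grassmannian $\mathcal{L}_{q}$ is a complex manifold, as in the proof of Theorem~\ref{thm:integrable}); then
\[
\big(\mathcal{E}_{q}+\sigma\,X_{E}(q)\big)\wedge u^{1}(\sigma)\wedge\cdots\wedge u^{n}(\sigma)
\]
is holomorphic in $\sigma$ and vanishes for $\sigma$ real, hence vanishes on all of $D_{1+\epsilon}$ by the identity theorem, which is precisely the assertion $\mathcal{E}_{q}+\sigma\,X_{E}(q)\in P_{q}(\sigma)$. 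Evaluating at $\sigma=i$ and using that $P_{q}(i)$ is a complex subspace (so closed under multiplication by $-i/\tau$) gives $(\Psi_{z})_{\ast}(\partial_{\sigma}-i\,\partial_{\tau})\in P_{q}(i)$. So $\Psi_{z}$ is holomorphic at every point of $S_{R}$ with $\tau\neq 0$; since holomorphicity is a closed condition on the smooth map $\Psi_{z}$ and $\{\tau\neq 0\}$ is dense in $S_{R}$, $\Psi_{z}$ is holomorphic on all of $S_{R}$. As $z=(x,v)$ with $|v|=1$ was arbitrary, $J$ is adapted. (Alternatively, the zero section can be handled directly from the explicit form of $(\Phi_{\sigma})_{\ast}$ on it given in Theorem~\ref{zerosec:thm}.)

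I expect the main obstacle to be identifying the correct family of vectors to continue: one must recognize that $\mathcal{E}+\sigma\,X_{E}$ is the right object (it is what the partials $\partial_{\sigma}\Psi_{z}$ and $\partial_{\tau}\Psi_{z}$ force, but is not obvious before carrying out the pushforward computations), and then perform cleanly the two differentiations that compute $(\Psi_{z})_{\ast}$ and $(\Phi_{\sigma})_{\ast}\mathcal{E}_{\Phi_{-\sigma}(q)}$ using the degree-$2$ homogeneity of $E$. Once the relation $\mathcal{E}_{q}+\sigma\,X_{E}(q)\in P_{q}(\sigma)$ is in hand for real $\sigma$, the analytic continuation is routine and the reduction of adaptedness to it is formal. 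A minor bookkeeping point is the behavior on the zero section, where the factor $1/\tau$ degenerates; this is dispatched by the density/closedness remark above or by Theorem~\ref{zerosec:thm}.
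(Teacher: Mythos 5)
Your proof is correct and is essentially the paper's argument: both rest on exhibiting a family of tangent vectors depending linearly (hence holomorphically) on $\sigma$ that lies in $P_q(\sigma)$ for real $\sigma$, continuing analytically to $\sigma=i$, and reading off holomorphicity of $\Psi_z$. The paper packages the key vectors as $\tfrac{\partial}{\partial\tau}\Psi_z(\sigma+t\tau,\tau)=(\Psi_z)_*(\partial_\tau)+t(\Psi_z)_*(\partial_\sigma)$ using the invariance of the strip under the geodesic flow, which is exactly your $\tfrac{1}{\tau}\bigl(\mathcal{E}_q+t\,X_E(q)\bigr)$ obtained via the scaling relation, so the difference is purely one of presentation.
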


\begin{remark}
In Section \ref{jacobi.sec}, we show $P_{z}(i)$ can be computed in terms of Jacobi fields in a way that agrees with formulas in \cite{Lempert-Szoke} for the adapted complex structure.
\end{remark}

\begin{proof}
The main point is that the geodesic flow $\Phi_{t}$ leaves invariant the tangent bundle of a geodesic, as a surface inside $TM,$ and the action of $\Phi_{t}$ on this surface is the same as the geodesic flow for the real line.

Now, the geodesic flow on $T^{R}M$ leaves each strip inside $T^{R}M$ of the form $\Psi_{z}(S_{R})$ invariant. In fact, for each $t\in\mathbb{R},$ the time-$t$ geodesic flow on such a strip corresponds to the map $(\sigma,\tau)\rightarrow(\sigma+t\tau,\tau).$ (That is to say, $\Phi_{t}(\Psi_{z}(\sigma,\tau))=\Psi_{z}(\sigma+t\tau,\tau).$) Furthermore, for each $\sigma,$ the curve $s\rightarrow\Psi_{z}(\sigma,\tau+s)$ is ``vertical,'' i.e., lying in a single fiber. It follows that the vector
\begin{equation}
\frac{\partial}{\partial\tau}\Psi_{z}(\sigma+t\tau,\tau)=(\Psi_{z})_{\ast} (\partial/\partial\tau)+t(\Psi_{z})_{\ast}(\partial/\partial\sigma)
\label{lin_comb}
\end{equation}
belongs to $P_{u}(t)$ for all real numbers $t$ and each point $u$ in the strip. (Every point in the strip is of the form $\Psi_{z}(\sigma+t\tau,\tau)$ for some $\sigma$ and $\tau.$)

Now, the right-hand side of (\ref{lin_comb}) is clearly a holomorphic vector-valued function of $t$ for $t\in\mathbb{C}.$ The family $P_{u}(t)$ of subspaces also depends holomorphically on $t$ for $t$ in a ball of radius greater than $1$ around the origin. It follows that the right-hand side of (\ref{lin_comb}) is in $P_{u}(t)$ for all $t$ in that ball, including $t=i.$ This means that $(\Psi_{z})_{\ast}(\partial/\partial\tau)+i(\Psi_{z})_{\ast}(\partial/\partial\sigma)$ belongs to $P_{u}(i).$ Recalling that $P_{u}(i)$ is the $J=i$ subspace and $\overline{P_{u}(i)}$ is the $J=-i$ subspace, a little algebra shows that $J((\Psi_{z})_{\ast}(\partial/\partial\sigma))=(\Psi_{z})_{\ast}(\partial/\partial\tau).$ This shows that $\Psi_{z}$ is holomorphic.
\end{proof}

\subsection{Involution and K\"{a}hler potential\label{kahler.sec}}

We next give proofs from our point of view that twice the energy function $E$ is a K\"{a}hler potential for the adapted complex structure, and that the map $(x,v)\longmapsto(x,-v)$ is antiholomorphic. As mentioned in Section \ref{sec:main results}, these results already appear in \cite{Lempert-Szoke}, and show that the complex structure we have constructed satisfies the conditions of the theorem on p. 568 of \cite{Guillemin-Stenzel1}. Scaling (or the homogeneity property of $E$) gets used in both cases.

\begin{theorem}
[Involution]\label{involution.thm}Let $R$ be as in Theorem \ref{thm:continuation}. Then the map $(x,v)\longmapsto(x,-v)$ is antiholomorphic with respect to the adapted complex structure on $T^{R}M.$
\end{theorem}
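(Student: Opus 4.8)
The plan is to derive everything from the scaling identity of Theorem~\ref{scaling.thm} (with $c=-1$) together with the fact that $P_z(\sigma)$ is real for real $\sigma$. Write $\phi=N_{-1}$; since $|-v|=|v|$, $\phi$ maps $T^{R}M$ to itself, and $P_{\phi(z)}(i)$ is defined for every $z\in T^{R}M$ by Theorem~\ref{thm:continuation}. Recall that, by construction, $P_w(i)$ is the $(+i)$-eigenspace of $J$ at $w$ and $\overline{P_w(i)}$ is the $(-i)$-eigenspace; hence $\phi$ is antiholomorphic if and only if its complexified differential satisfies $\phi_*\bigl(P_z(i)\bigr)=\overline{P_{\phi(z)}(i)}$ for all $z\in T^{R}M$. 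So the whole task reduces to establishing this one identity.

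First I would record a conjugation symmetry of the family $\sigma\mapsto P_z(\sigma)$. For real $\sigma$, $P_z(\sigma)=(\Phi_\sigma)_*(V_{\Phi_{-\sigma}(z)})$ is the complexification of a real subspace of $T_zTM$, so $\overline{P_z(\sigma)}=P_z(\sigma)$. Now, in a fixed real-analytic trivialization of $T^{\mathbb{C}}TM$ near $z$, choose a holomorphic frame $u^1(\sigma),\dots,u^n(\sigma)$ of $P_z(\sigma)$ on $D_{1+\epsilon}$ (as in Lemma~\ref{lemma:analytic}); then $\overline{u^j(\bar\sigma)}$ is again holomorphic in $\sigma$ and spans $\overline{P_z(\bar\sigma)}$, so $\sigma\mapsto\overline{P_z(\bar\sigma)}$ is a holomorphic map $D_{1+\epsilon}\to\mathcal L_z$. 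It agrees with $\sigma\mapsto P_z(\sigma)$ on the real axis, so by the identity theorem $\overline{P_z(\bar\sigma)}=P_z(\sigma)$ throughout $D_{1+\epsilon}$; taking $\sigma=-i$ gives $\overline{P_z(i)}=P_z(-i)$.

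Next I would apply Theorem~\ref{scaling.thm} with $c=-1$: for real $\sigma$, $P_{\phi(z)}(\sigma)=\phi_*\bigl(P_z(-\sigma)\bigr)$. Both sides are holomorphic in $\sigma$ on $D_{1+\epsilon}$ — the left by Theorem~\ref{thm:continuation} applied at $\phi(z)\in T^{R}M$, the right because $\phi_*$ is a fixed linear isomorphism and $\sigma\mapsto P_z(-\sigma)$ is holomorphic — so the identity persists on $D_{1+\epsilon}$ and, at $\sigma=i$,
\[
P_{\phi(z)}(i)=\phi_*\bigl(P_z(-i)\bigr)=\phi_*\bigl(\overline{P_z(i)}\bigr).
\]
Finally, since $\phi$ is a real(-analytic) diffeomorphism, its complexified differential commutes with complex conjugation, so $\phi_*(\overline{W})=\overline{\phi_*(W)}$; hence $P_{\phi(z)}(i)=\overline{\phi_*(P_z(i))}$, i.e.\ $\phi_*(P_z(i))=\overline{P_{\phi(z)}(i)}$, as required.

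The argument is short, and I do not expect a serious obstacle. The two points needing care are: (i) justifying that the scaling identity, a priori stated only for real $\sigma$, analytically continues to $\sigma=i$, which is immediate once one checks that both sides are holomorphic on the common disk $D_{1+\epsilon}$ and agree on $\mathbb{R}$; and (ii) the Schwarz-reflection step showing $\sigma\mapsto\overline{P_z(\bar\sigma)}$ is holomorphic, which is where one works with an explicit holomorphic frame rather than arguing with the Grassmannian abstractly. Everything else is bookkeeping of the eigenspaces of $J$.
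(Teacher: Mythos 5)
Your proof is correct and follows essentially the same route as the paper's: the paper likewise combines the reflection symmetry $P(\sigma-i\tau)=\overline{P(\sigma+i\tau)}$ with the $c=-1$ case of the scaling theorem, analytically continues to $\sigma=i$, and concludes $(N_{-1})_*P(i)=\overline{P(i)}$. You have merely supplied the details (the holomorphic-frame justification of the Schwarz reflection, and the fact that $\phi_*$ commutes with conjugation) that the paper leaves implicit.
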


\begin{proof}
Because $P(\sigma)=\overline{P(\sigma)}$ for $\sigma\in\mathbb{R},$ it is easily seen that $P(\sigma-i\tau)=\overline{P(\sigma+i\tau)}.$ By Theorem \ref{scaling.thm}, we have $(N_{-1})_{\ast}P(\sigma)=P(-\sigma)$. On $T^{R}M$, analytically continuing and evaluating at $\sigma=i$ yields $(N_{-1})_{\ast}P(i)=P(-i)=\overline{P(i)}.$ This shows that $N_{-1}$ (i.e., the map $(x,v)\longmapsto(x,-v)$) is antiholomorphic.
\end{proof}

\begin{theorem}[K\"{a}hler potential]
\label{thm:kaehler_potential}
The function $\kappa(x,v):=g(v,v)=2E(x,v)$ is a K\"{a}hler potential for the adapted complex structure on $T^{R}M$. Specifically, $\operatorname{Im}\bar{\partial}\kappa=\Theta\,$ where $\Theta$ is the canonical 1-form on $TM\cong T^{\ast}M.$
\end{theorem}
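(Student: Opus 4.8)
The plan is to prove the sharp identity $\operatorname{Im}\bar\partial\kappa=\Theta$ and to obtain the K\"ahler-potential statement from it formally: for a \emph{real} function $\kappa$ one has $\operatorname{Im}\bar\partial\kappa=\tfrac{1}{2i}(\bar\partial\kappa-\partial\kappa)$, hence $-d(\operatorname{Im}\bar\partial\kappa)=i\,\partial\bar\partial\kappa$, so $\operatorname{Im}\bar\partial\kappa=\Theta$ at once gives $\omega=-d\Theta=i\,\partial\bar\partial\kappa$ (the K\"ahler structure itself having already been produced in Theorem \ref{thm:pos Lagr}). To prove $\operatorname{Im}\bar\partial\kappa=\Theta$, I would first observe that both sides are \emph{real} $1$-forms on $T^{R}M$, and that a real $1$-form is determined by the restriction of its complex-linear extension to the $(1,0)$-subspace $P_{z}(i)$ (its values on $\overline{P_{z}(i)}$ being the complex conjugates). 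Since $\bar\partial\kappa$ annihilates $(1,0)$-vectors, one has $\operatorname{Im}\bar\partial\kappa(Z)=\tfrac{i}{2}\,d\kappa_{z}(Z)$ for $Z\in P_{z}(i)$, so, using $\kappa=2E$, the theorem reduces to checking
\[
\Theta_{z}(Z)=i\,dE_{z}(Z)\qquad\text{for every }z\in T^{R}M\text{ and every }Z\in P_{z}(i),
\]
where $\Theta$ and $dE$ are extended complex-linearly to $T_{z}^{\mathbb{C}}TM$.

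Next I would record how $\Theta$ transforms under the geodesic flow. In the canonical coordinates $q^{j},p_{j}$ of the proof of Theorem \ref{zerosec:thm} one has $\iota_{X_{E}}\Theta=g^{jk}p_{j}p_{k}=2E$ and $\iota_{X_{E}}\omega=dE$, so Cartan's formula together with $\omega=-d\Theta$ gives
\[
\mathcal{L}_{X_{E}}\Theta=d\,\iota_{X_{E}}\Theta+\iota_{X_{E}}\,d\Theta=2\,dE-\iota_{X_{E}}\omega=dE.
\]
Since $E$ is conserved by the geodesic flow, $\Phi_{\sigma}^{*}(dE)=dE$, so integrating $\tfrac{d}{d\sigma}\Phi_{\sigma}^{*}\Theta=\Phi_{\sigma}^{*}(\mathcal{L}_{X_{E}}\Theta)=dE$ from $\sigma=0$ yields $\Phi_{\sigma}^{*}\Theta=\Theta+\sigma\,dE$ for all real $\sigma$. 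Now fix $z$ and real $\sigma$, set $z'=\Phi_{-\sigma}(z)$, and take $Z=(\Phi_{\sigma})_{*}\eta\in P_{z}(\sigma)$ with $\eta\in V_{z'}$. Because $\Theta$ annihilates vertical vectors (in the above coordinates $\Theta=p_{j}\,dq^{j}$ kills $\partial/\partial p_{j}$), the previous identity gives $\Theta_{z}(Z)=(\Phi_{\sigma}^{*}\Theta)_{z'}(\eta)=\Theta_{z'}(\eta)+\sigma\,dE_{z'}(\eta)=\sigma\,dE_{z'}(\eta)$; moreover $dE_{z}(Z)=Z(E)=\eta(E\circ\Phi_{\sigma})=\eta(E)=dE_{z'}(\eta)$ by energy conservation. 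Hence $\Theta_{z}(Z)=\sigma\,dE_{z}(Z)$ for all $Z\in P_{z}(\sigma)$ and all real $\sigma$.

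Finally I would pass to $\sigma=i$. Since $\Phi_{\sigma}$ is undefined there, the continuation has to be done at the level of the scalar identity: working in a local analytic trivialization of $T^{\mathbb{C}}TM$ and choosing, as in the proof of Theorem \ref{thm:integrable}, an analytic frame $u^{1}(z,\sigma),\dots,u^{n}(z,\sigma)$ of $P_{z}(\sigma)$ depending analytically on $(z,\sigma)\in T^{R}M\times D_{1+\epsilon}$ (Theorem \ref{thm:continuation}), the functions $\Theta_{z}(u^{j}(z,\sigma))$ and $\sigma\,dE_{z}(u^{j}(z,\sigma))$ are analytic in $(z,\sigma)$ and, by the previous paragraph, agree for real $\sigma$; hence they agree at $\sigma=i$, and by linearity $\Theta_{z}(Z)=i\,dE_{z}(Z)$ for all $Z\in P_{z}(i)$, which is exactly the identity isolated above. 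I expect this last step --- carrying the identity correctly to imaginary time --- to be the only genuine obstacle; an alternative is to use the homogeneity of $E$ and the scaling relation (Theorem \ref{scaling.thm}) to reduce to $|v|=1$ and then work on the holomorphic strips $\Psi_{z}(S_{R})$ of Theorem \ref{adapted.thm}, on which $\kappa\circ\Psi_{z}(\sigma,\tau)=\tau^{2}$ and $\Psi_{z}^{*}\Theta=\tau\,d\sigma$, so that the holomorphic coordinate $w=\sigma+i\tau$ gives $\Psi_{z}^{*}(\operatorname{Im}\bar\partial\kappa)=\tau\,d\sigma$ directly --- but that route only pins the two forms down along the strips, so the geodesic-flow computation seems cleaner.
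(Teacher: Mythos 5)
Your proposal is correct and follows essentially the same route as the paper: the heart of both arguments is the identity $\Theta(Z)=\sigma\,Z(E)$ for $Z\in P_{z}(\sigma)$, established for real $\sigma$ and then carried to $\sigma=i$ by analyticity of a frame $u^{j}(z,\sigma)$ as in the integrability proof, after which $\operatorname{Im}\bar{\partial}\kappa=\Theta$ falls out from the type decomposition of $\Theta$. The only (cosmetic) difference is that you obtain the real-$\sigma$ identity from the global pullback formula $\Phi_{\sigma}^{*}\Theta=\Theta+\sigma\,dE$ via Cartan's formula, whereas the paper solves the equivalent transport equation $\partial u/\partial\sigma=-X_{E}u$ for $u=\Theta(X^{\sigma})-\sigma X^{\sigma}(E)$.
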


We begin with a lemma.

\begin{lemma}
\label{lemma:theta_comp}
For all $Z\in P_{z}(\sigma)\subset T_{z}^{\mathbb{C}}TM$ we have
\begin{equation}
\Theta(Z)=\sigma Z(E). \label{theta.id}
\end{equation}
\end{lemma}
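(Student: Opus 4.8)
The plan is to deduce the identity from a single pointwise statement describing how the canonical one-form transforms under the geodesic flow:
\begin{equation}
\Phi_{\sigma}^{\ast}\Theta=\Theta+\sigma\,dE\qquad\text{for all }\sigma\in\mathbb{R}.\label{eqn:theta-transform}
\end{equation}
Granting \eqref{eqn:theta-transform}, take a real $\sigma$ and $Z\in P_{z}(\sigma)$, and write $Z=(\Phi_{\sigma})_{\ast}W$ with $W$ vertical at $w:=\Phi_{-\sigma}(z)$. Since the tautological one-form kills vertical vectors (under the metric identification $\Theta_{(x,v)}(W)=g(v,\pi_{\ast}W)$, and $\pi_{\ast}W=0$ for vertical $W$), we get
\[
\Theta(Z)=(\Phi_{\sigma}^{\ast}\Theta)_{w}(W)=\Theta_{w}(W)+\sigma\,dE_{w}(W)=\sigma\,W(E).
\]
On the other hand, because $E$ is conserved by the geodesic flow we have $\Phi_{\sigma}^{\ast}dE=d(E\circ\Phi_{\sigma})=dE$, so
\[
Z(E)=dE_{z}\big((\Phi_{\sigma})_{\ast}W\big)=(\Phi_{\sigma}^{\ast}dE)_{w}(W)=dE_{w}(W)=W(E).
\]
Combining the two displays yields $\Theta(Z)=\sigma Z(E)$.

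To establish \eqref{eqn:theta-transform} I would use Cartan's formula together with the homogeneity of $E$. Since $\omega=-d\Theta$ and $\iota_{X_{E}}\omega=dE$, Cartan's formula gives
\[
\mathcal{L}_{X_{E}}\Theta=d\big(\iota_{X_{E}}\Theta\big)+\iota_{X_{E}}\,d\Theta=d\big(\Theta(X_{E})\big)-\iota_{X_{E}}\omega=d\big(\Theta(X_{E})\big)-dE.
\]
In the coordinates of the footnote, $\Theta=p_{j}\,dq^{j}$ and $\Theta(X_{E})=p_{j}\,\partial E/\partial p_{j}=2E$ by Euler's identity, since $E=\tfrac12 g^{ij}p_{i}p_{j}$ is homogeneous of degree two in the fibers. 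Hence $\mathcal{L}_{X_{E}}\Theta=2\,dE-dE=dE$, and therefore $\tfrac{d}{d\sigma}\Phi_{\sigma}^{\ast}\Theta=\Phi_{\sigma}^{\ast}(\mathcal{L}_{X_{E}}\Theta)=\Phi_{\sigma}^{\ast}\,dE=dE$. Integrating from $\sigma=0$, where $\Phi_{0}=\mathrm{id}$, gives \eqref{eqn:theta-transform}. This step is the real content of the lemma, and I expect it to be routine; the only thing to keep straight is the sign convention $\omega=-d\Theta$, $\iota_{X_{E}}\omega=dE$.

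Finally, to obtain the statement for complex $\sigma$ in the disk $D_{1+\epsilon}$ of Theorem \ref{thm:continuation} (in particular $\sigma=i$), I would argue by analytic continuation in $\sigma$. Fix $z\in T^{R}M$ and, as in Lemma \ref{lemma:analytic}, choose an analytic local frame $v^{1},\dots,v^{n}$ of the vertical distribution and set $u^{j}(\sigma):=(\Phi_{\sigma})_{\ast}v^{j}$, which extends holomorphically in $\sigma$ to $D_{1+\epsilon}$ and spans $P_{z}(\sigma)$ there. Extending $\Theta$ and $dE$ complex-linearly to $T^{\mathbb{C}}TM$, both $\sigma\mapsto\Theta(u^{j}(\sigma))$ and $\sigma\mapsto\sigma\,u^{j}(\sigma)(E)$ are holomorphic on $D_{1+\epsilon}$ and agree for real $\sigma$ by the computation above, hence agree on all of $D_{1+\epsilon}$; by linearity the identity $\Theta(Z)=\sigma Z(E)$ then holds for every $Z\in P_{z}(\sigma)$. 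The only potential obstacle here is bookkeeping: one must read ``$Z(E)$'' as the complex-linear extension $dE(Z)$, not as any derivative of a holomorphic extension of $E$ itself — and no such extension is needed, since the claimed identity is linear in $Z$ and it suffices to check it on the analytic frame.
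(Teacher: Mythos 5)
Your argument for real $\sigma$ is correct and is a genuinely different (and arguably cleaner) packaging of the same underlying facts. You derive the global transformation law $\Phi_{\sigma}^{\ast}\Theta=\Theta+\sigma\,dE$ once and for all via Cartan's formula, using exactly the two identities the paper also relies on: $\Theta(X_{E})=2E$ (Euler's identity for the degree-$2$ homogeneity of $E$) and $\iota_{X_{E}}\omega=dE$. The paper instead fixes a vertical vector field $X$, sets $X^{\sigma}=(\Phi_{\sigma})_{\ast}X$ and $u(z,\sigma)=\Theta(X^{\sigma})-\sigma X^{\sigma}(E)$, computes the transport equation $\partial u/\partial\sigma=-X_{E}(u)$, and concludes $u\equiv 0$ from $u(\cdot,0)=0$; that is the ``infinitesimal, per-vector-field'' version of your pullback identity. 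Your version buys an invariant statement ($\Phi_{\sigma}^{\ast}\Theta=\Theta+\sigma dE$, i.e.\ the geodesic flow translates the Liouville form by $\sigma\,dE$) that makes the vanishing of $\Theta-\sigma dE$ on $P_{z}(\sigma)$ transparent, and your separate observation that $Z(E)=W(E)$ by conservation of $E$ is exactly the bookkeeping needed to rewrite $\sigma W(E)$ as $\sigma Z(E)$. The signs all check out with the paper's conventions $\omega=-d\Theta$, $\omega(X_{f},\cdot)=df$.

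The one weak spot is the analytic-continuation step. You assert that the frame $u^{j}(\sigma)=(\Phi_{\sigma})_{\ast}v^{j}$ ``extends holomorphically in $\sigma$ to $D_{1+\epsilon}$.'' Theorem \ref{thm:continuation} only guarantees that the \emph{Grassmannian-valued} map $\sigma\mapsto P_{z}(\sigma)$ extends holomorphically to $D_{1+\epsilon}$; a particular spanning frame is guaranteed to extend only locally in $\sigma$ (via a local holomorphic section of $GL(2n,\mathbb{C})\to\mathcal{K}_{n}$, as in the proof of Theorem \ref{thm:integrable}), and could in principle degenerate before the Grassmannian-valued map does. This is why the paper's continuation step for this lemma explicitly invokes ``an argument similar to the proof of Theorem \ref{thm:integrable}'': one checks the identity on local holomorphic frames near each $\sigma$ and propagates it from the real axis through the connected disk (e.g.\ by the infimum argument used there). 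Your argument is repaired verbatim by that substitution — the functions $\sigma\mapsto\Theta(u^{j}(\sigma))-\sigma\,u^{j}(\sigma)(E)$ are holomorphic wherever a local holomorphic frame exists and vanish for real $\sigma$ — so the gap is one of justification rather than of substance. Your closing remark that $Z(E)$ must be read as the complex-linear extension $dE(Z)$ is exactly right and worth keeping.
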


\begin{proof}
Recall that $\Theta$ is the canonical 1-form on $TM$ induced from that on $T^*M$ by the identification via the metric $g$; in particular, it is given by $\Theta_{(x,v)}(Z)=g(v,\pi_{\ast}Z).$ We begin by proving that for all $\sigma$ in any disk around the origin in $\mathbb{C}$ on which $P_{z}(\sigma)$ is defined holomorphically. This is the same as saying that the 1-forms $\Theta$ and $\sigma dE$ agree on vectors in $P(\sigma).$ We first establish this when $\sigma$ is in $\mathbb{R}.$ Suppose $X$ is a vector field lying in the vertical tangent space at each point in $TM,$ and let $X^{\sigma}=(\Phi_{\sigma})_{\ast}(X).$ We wish to show that the function
\[
u(z,\sigma):=\Theta(X^{\sigma})-\sigma X^{\sigma}(E)
\]
is identically zero for all $\sigma.$ Let us differentiate with respect to $\sigma$. Since $dX^{\sigma}/d\sigma=-[X_{E},X^{\sigma}]$ and $X_{E}(E)=0,$ we obtain
\begin{align*}
\frac{\partial u}{\partial\sigma}  &  =\Theta([X^\sigma,X_E])-X^\sigma(E)+\sigma X_E X^\sigma(E))\\
&  =X^\sigma(\Theta(X_E))-X_E(\Theta(X^\sigma))-d\Theta(X^{\sigma},X_{E})-X^{\sigma}(E)+\sigma X_{E}(X^{\sigma}(E))\\
&  =-X_E(u)+2X_\sigma(E)-\omega(X_E,X^\sigma)-X^\sigma(E)\\
&  =-X_{E}(u),
\end{align*}
where in the second line we use the identity $d\alpha(X,Y)=X(\alpha(Y))-Y(\alpha(X))-\alpha([X,Y]),$ in the third line we use the computation $\Theta(X_{E})=2E$ and the definition of $\omega$, and in the fourth line we use the definition of the Hamiltonian vector field to obtain $-d\Theta(X^{\sigma},X_{E})=-dE(X^{\sigma})=-X^{\sigma}(E)$ from which the result follows.

Now, the solution to the equation $\partial u/\partial\sigma=-X_{E}u$ is just $u(z,\sigma)=f(\Phi_{-\sigma}(z)),$ where $f(z)=u(z,0).$ In our case, $u(z,0)=0,$ because $\Theta$ is zero on the vertical subspace, and so $u$ is identically equal to zero. Since $X^{\sigma}$ can take any value in $P(\sigma)$ (for appropriate choice of $X$), we conclude that (\ref{theta.id}) holds for all real $\sigma.$ Using an argument similar to the proof of Theorem \ref{thm:integrable}, we see that (\ref{theta.id}) holds for all $\sigma\in D_{1+\epsilon}.$
\end{proof}

\bigskip

\begin{proof}[Proof of Theorem \ref{thm:kaehler_potential}]
We start with the two facts that $d=\partial+\bar{\partial}$ and $\omega$ is of type $(1,1)$. Then
\begin{align}
\omega &  =-d\Theta=-(\partial+\bar{\partial})(\Theta^{1,0}+\Theta^{0,1})\nonumber\\
&  =-\partial\Theta^{0,1}-\bar{\partial}\Theta^{1,0} \label{eqn:sympprim}
\end{align}
since $\partial\Theta^{1,0}=\bar{\partial}\Theta^{0,1}=0.$

If we now apply (\ref{theta.id}) with $\sigma=i$ and $\sigma=-i$ (for $z\in T^{R}M$), we obtain
\begin{align*}
\Theta^{1,0}(X)  &  =\Theta(X^{1,0})=iX^{1,0}(E)=i\partial E(X)\text{, and}\\
\Theta^{0,1}(X)  &  =\Theta(X^{0,1})=-iX^{0,1}(E)=-i\bar{\partial}E(X).
\end{align*}
Hence,
\[
\Theta=\Theta^{1,0}+\Theta^{0,1}=i\partial E-i\bar{\partial}E =2\operatorname{Im}\bar{\partial}E=\operatorname{Im}\bar{\partial}\kappa.
\]
\end{proof}

\subsection{Computation of the adapted complex structure in terms of Jacobi
fields\label{jacobi.sec}}

We now give a more-or-less explicit way of computing the adapted complex structure. The results of this subsection also give a more direct way of verifying that the complex structure we have defined coincides with the adapted complex structure of Lempert and Sz\H{o}ke.

Fix a point $z=(x,v)$ in $TM.$ Choose a basis $\{v_{j}\}$ for $T_{x}M$ and let $\xi_{j}$ and $\eta_{j}$ denote the horizontal and vertical lifts, respectively, of $v_{j}$ to $T_{z}TM.$ We now push these vectors forward by the geodesic flow, defining a family of tangent vectors along the curve $\Phi_{\sigma}(z)$:
\begin{align*}
\xi_{j}(\sigma)  &  :=(\Phi_{\sigma})_{\ast}(\xi_{j})\\
\eta_{j}(\sigma)  &  :=(\Phi_{\sigma})_{\ast}(\eta_{j}).
\end{align*}
Finally, we project these vectors down to $M$ by defining
\begin{align*}
v_{j}(\sigma)  &  :=\pi_{\ast}(\xi_{j}(\sigma))\\
w_{j}(\sigma)  &  :=\pi_{\ast}(\eta_{j}(\sigma)).
\end{align*}
The vector fields $v_{j}$ and $w_{j}$, defined along the geodesic $\pi(\Phi_{\sigma}(x,v))$, are Jacobi fields. (We refer the reader to \cite[Chap. 4]{Jost} for background on Jacobi fields.) Note that $\xi_{j}(0)=\xi_{j},$ $\eta_{j}(0)=\eta_{j},$ $v_{j}(0)=v_{j},$ and $w_{j}(0)=0.$

For all sufficiently small $\sigma,$ $\{v_{j}(\sigma)\}$ is a basis for the tangent space to $M$ at the point $\pi(\Phi_{\sigma}(z))$ and $\{\xi_{j}(\sigma),\eta_{j}(\sigma)\}$ is a basis for the tangent space to $TM$ at the point $\Phi_{\sigma}(z).$ We let $f_{z}(\sigma)$ denote the matrix expressing $w_{j}(\sigma)$ in terms of $v_{j}(\sigma)$:
\begin{equation}
w_{j}(\sigma)=\sum\nolimits_{k}f_{z}(\sigma)_{j}^{k}v_{k}(\sigma).
\label{fdef}
\end{equation}
It then follows that
\[
\pi_{\ast}\left[  \eta_{j}(\sigma)-\sum\nolimits_{k}f_{z}(\sigma)_{j}^{k}
\xi_{k}(\sigma)\right]  =0.
\]
This says that the vectors $\eta_{j}(\sigma)-\sum f_{v}(\sigma)_{j}^{k}\xi_{k}(\sigma)$ are contained in the vertical subspace of $T_{\Phi_{\sigma}(z)}TM,$ and by the independence of $\xi,\eta,$ these vectors actually span the vertical subspace.

Note that from the way $\xi_{j}(\sigma)$ and $\eta_{j}(\sigma)$ are defined, we have $(\Phi_{\sigma})_{\ast}\xi_{j}(-\sigma)=\xi_{j}$ and $(\Phi_{\sigma})_{\ast}\eta_{j}(-\sigma)=\eta_{j}.$ Thus the vectors
\begin{equation}
\eta_{j}-\sum\nolimits_{k}f_{z}(-\sigma)_{j}^{k}\xi_{k}\in T_{z}TM
\label{pSpan}
\end{equation}
span the push-forward (to $T_{z}TM$) of the vertical subspace of $T_{\Phi_{-\sigma}(z)}TM.$

It follows that if, for a fixed $z,$ the matrix-valued function $f_{z}(\cdot)$ admits an analytic continuation to a disk of radius greater than $1,$ then the family of subspaces $P_{z}(\sigma)$ also admits an analytic continuation,
defined by putting in a complex value for $\sigma$ into (\ref{pSpan}) and taking the span of the resulting vectors. (Since the $\eta$'s are independent of the $\xi$'s, any collection of $n$ vectors of the form (\ref{pSpan}) are linearly independent.) We may encapsulate the preceding observations in the following theorem.

\begin{theorem}
For a given $z\in TM,$ let $f(\cdot)$ be the matrix-valued function defined, for sufficiently small $\sigma\in\mathbb{R},$ by (\ref{fdef}). If $f$ admits a matrix-valued holomorphic extension to a disk of radius $R,$ then so also do the subspaces $P_{z}(\cdot),$ by setting $P_{z}(\sigma)$ equal to the span of the vectors in (\ref{pSpan}).
\end{theorem}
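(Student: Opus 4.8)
The plan is to make precise the informal reasoning already laid out in the paragraphs immediately preceding the theorem statement, where the subspaces $P_z(\sigma)$ are identified with the span of the vectors in (\ref{pSpan}). The key observation is that the vectors $\eta_j - \sum_k f_z(-\sigma)_j^k \xi_k \in T_z TM$ are built from two ingredients: the fixed (independent of $\sigma$) vectors $\xi_j,\eta_j\in T_zTM$, and the matrix entries $f_z(-\sigma)_j^k$. If $f$ extends holomorphically to a disk $D_R$ of radius $R$, then $\sigma\mapsto f_z(-\sigma)$ is holomorphic on $D_R$ as well, and hence each of the $n$ vector-valued functions $\sigma\mapsto \eta_j - \sum_k f_z(-\sigma)_j^k\xi_k$ is a holomorphic $T_z^{\mathbb C}TM$-valued function on $D_R$.

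First I would fix a local analytic trivialization of $T^{\mathbb C}TM$ near $z$ so that $T_z^{\mathbb C}TM\cong\mathbb C^{2n}$ and the Grassmannian $\mathcal K_n$ of $n$-planes in $\mathbb C^{2n}$ is a concrete complex manifold; recall (as in the proof of Theorem \ref{thm:integrable}) that $GL(2n,\mathbb C)\to\mathcal K_n$ admits local holomorphic sections, so that a plane depends holomorphically on parameters precisely when it is locally spanned by holomorphically varying frames. Next I would note that because the $\eta_j$ are linearly independent of the $\xi_j$, the $n$ vectors (\ref{pSpan}) are linearly independent for every value of $\sigma\in D_R$ (complex or real), since any nontrivial linear combination has a nonzero $\eta$-component. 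Therefore $\sigma\mapsto$ (span of the vectors in (\ref{pSpan})) is a well-defined map $D_R\to\mathcal K_n$, and by the local-section description it is holomorphic. Landing in the Lagrangian Grassmannian $\mathcal L_z$ rather than merely $\mathcal K_n$ then follows by analyticity: for real $\sigma$ near $0$ the span equals $P_z(\sigma)$, which is Lagrangian, so the holomorphic $\mathcal K_n$-valued map actually takes values in the closed complex submanifold $\mathcal L_z$ on a neighborhood of $0$ and hence, being holomorphic and $\mathcal L_z$ being a complex submanifold, on all of $D_R$ (connectedness plus the identity theorem applied to the defining equations of $\mathcal L_z$). Finally, uniqueness: any holomorphic extension of $\sigma\mapsto P_z(\sigma)$ must agree with this one on the real axis near $0$, hence everywhere on $D_R$ by the identity theorem for holomorphic maps into a complex manifold; so the extension ``is'' the span of (\ref{pSpan}).

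The one point requiring a sentence of care — and the closest thing to an obstacle — is matching the span of (\ref{pSpan}) with the analytically-continued $P_z(\sigma)$ for real $\sigma$: this is exactly the content of the discussion around (\ref{pSpan}), namely that for sufficiently small real $\sigma$ the vectors $\eta_j-\sum_k f_z(-\sigma)_j^k\xi_k$ span $(\Phi_\sigma)_\ast(V_{\Phi_{-\sigma}(z)})=P_z(\sigma)$, which follows from $(\Phi_\sigma)_\ast\xi_j(-\sigma)=\xi_j$, $(\Phi_\sigma)_\ast\eta_j(-\sigma)=\eta_j$, together with (\ref{fdef}). Once this identification is in hand on a small real interval, the identity theorem does all the remaining work, and there are no genuine analytic difficulties; the proof is essentially a packaging of the observations preceding the statement together with the standard fact that the Grassmannian is a complex manifold with local holomorphic frames.
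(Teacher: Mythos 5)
Your proposal is correct and follows essentially the same route as the paper, which offers no separate proof beyond the remark that the theorem ``encapsulates the preceding observations'': namely, that the vectors $\eta_j-\sum_k f_z(-\sigma)_j^k\xi_k$ span $P_z(\sigma)$ for small real $\sigma$, that they remain linearly independent for complex $\sigma$ because the $\eta$'s are independent of the $\xi$'s, and that the span therefore continues holomorphically whenever $f_z$ does. The extra care you take (local holomorphic frames on the Grassmannian, the identity-theorem argument for landing in $\mathcal{L}_z$, and uniqueness of the continuation) is a legitimate filling-in of details the paper leaves implicit, not a different method.
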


We can demonstrate directly, using the matrix $f$, that the complex structure arising from the time-$i$ geodesic flow coincides with the adapted complex structure as defined by Lempert--Sz\H{o}ke \cite{Lempert-Szoke}.

Lempert and Sz\H{o}ke show that for $v\in T^{R}M$, the adapted complex tensor $J_{v}\in End(T_{v}TM)$ is specified by \cite[Eq. (5.8)]{Lempert-Szoke}
\begin{equation}
J_{v}\xi_{j}=\left(  \left(  \operatorname{Im}f(i)\right)  ^{-1}\right)
_{~j}^{k}\left(  \eta_{k}-\left(  \operatorname{Re}f_{~k}^{l}(i)\right)
\xi_{l}\right)  ,~j=1,\dots,n. \label{eqn:L-S J tensor}%
\end{equation}
Since $\{\xi_{j}\}_{j=1}^{n}$ is linearly independent, the $(1,0)$-tangent space $T_{v}^{1,0}TM$ is spanned by~$\{\xi_{j}^{1,0}\}_{j=1}^{n}$ where the projection onto the $(1,0)$-tangent space is given by
\[
\xi^{1,0}:=\frac{1}{2}(1-iJ)\xi.
\]
Writing $f(i)=f_{1}+if_{2}$ (i.e., $f_{1}=\operatorname{Re}f(i)$ and $f_{2}=\operatorname{Im}f(i)$), it follows from (\ref{eqn:L-S J tensor}) that
\begin{equation}
\xi_{j}^{1,0}=\frac{i}{2}\sum\nolimits_{k}\left[  \left(  \overline{f(i)} f_{2}^{-1}\right)  _{~j}^{k}\xi_{k}~-~\left(  f_{2}^{-1}\right)_{~j}^{k}
\eta_{k}\right]. \label{eqn:1,0-components}
\end{equation}
The $(1,0)$-tangent space is therefore spanned by the vectors $\sum\nolimits_{k}\left[(\overline{f(i)}f_{2}^{-1})_{\,~j}^{k}\xi_{k}-(f_{2}^{-1})_{~j}^{k}\eta_{k}\right].$

As shown by Lempert and Sz\H{o}ke \cite[Lemma 6.7]{Lempert-Szoke}, $\operatorname{Im}f(i)=f_{2}$ is positive definite\footnote{This is actually a necessary step to derive the expression (\ref{eqn:L-S J tensor}); it can be verified with a short computation using the fact that $-i\omega(\xi^{1,0},\xi^{0,1})>0$ (Theorem \ref{thm:pos Lagr})}, and is hence invertible. Thus, the $(1,0)$-tangent space of the adapted complex structure, as defined by Lempert--Sz\H{o}ke, is spanned by the vectors $\sum\nolimits_{k}$ $\overline{f(i)}_{~j}^{k}\xi_{k}-\eta_{j}$; that is, it is $P(i)$.

\subsection{Connections to geometric quantization\label{quant.sec}}

The arguments we have used in this section also show that for any $\tau>0,$ we can obtain a well-defined K\"{a}hler distribution on some $T^{R}M$ by considering the subspace $P_{z}(i\tau)$ at each point $z\in T^{R}M.$ In light of our scaling result, Theorem \ref{scaling.thm}, this is equivalent to taking the adapted complex structure and rescaling by a factor of $\tau$ in the fibers. (That is, $P_{z}(i\tau)$ is the same as $(N_{1/\tau})_{\ast}P_{N_{\tau}z}(i).$) We obtain, then, a one-parameter family $J_{\tau}$ of complex structures indexed by the positive real number $\tau.$ If $f$ is a holomorphic function on $T^{R}M$ with respect to the adapted complex structure, then the function $f_{\tau}(x,v):=f(x,\tau v)$ is holomorphic with respect to $J_{\tau}.$ Note that in the limit as $\tau$ approaches zero, $f_{\tau}$ becomes constant along the fibers of $TM.$ This reflects the idea that the Lagrangian distribution associated to $J_{\tau}$ is obtained from the vertical distribution by the time-$i\tau$ geodesic flow, so that the$\ J_{\tau}$-distribution converges to the vertical distribution as $\tau$ tends to zero.

Now, given \textit{any} complex structure on (a neighborhood of the zero section in) $TM,$ one can rescale by a constant in the fibers to obtain a one-parameter family of complex structures. Furthermore, the distribution associated to this family of complex structures will converge to the vertical distribution as $\tau$ tends to zero. What is interesting in the case we are considering is that the one-parameter family of complex structures obtained by scaling the adapted complex structure can also be obtained by \textit{starting} with the vertical distribution and applying the imaginary-time geodesic flow.

The procedure of geometric quantization associates to each integrable Lagrangian distribution, or \textit{polarization}, the Hilbert space of sections of a\ certain line bundle\footnote{In geometric quantization, one studies \textit{prequantum} line bundles, that is, complex Hermitian line bundles with connection with curvature $-i\omega$.} which are covariantly constant along the distribution.

In \cite{Hall02}, the first author has considered geometric quantization on $TM\cong T^{\ast}M$ in the case that $M$ is a compact Lie group $K$ with a bi-invariant metric. The paper \cite{Hall02} considers the pairing map between the vertically polarized Hilbert space and the K\"{a}hler-polarized Hilbert space (with half-forms) associated to the adapted complex structure. It turns out that this pairing map is unitary (up to a constant) and coincides (up to a constant) with the generalized Segal--Bargmann transform, which was introduced in \cite{Hall94,Hall97} and defined in terms of the heat equation on $K.$ This result is surprising because the procedures of geometric quantization apparently have nothing to do with the heat equation.

In \cite{Florentino-Matias-Mourao-Nunes05,Florentino-Matias-Mourao-Nunes06}, Florentino--Mattias--Mour\~{a}o--Nunes looked at the results of \cite{Hall02} in terms of the one-parameter family of complex structures described above, namely those obtained from the adapted complex structure by scaling in the fibers. Using ideas similar to the ones in \cite{Axelrod-DellaPietra-Witten}, these authors consider a parallel transport in the Hilbert bundle associated to the family of complex structures. This means that the base of their bundle is the positive half-line and the fiber over a point $\tau$ is the geometric quantization Hilbert space associated to the complex structure $J_{\tau}.$ They compute this parallel transport and show that it is given in terms of the heat equation on $K.$ This calculation goes a long ways toward clarifying the results of \cite{Hall02}.

The present paper adds one more clarifying insight to the picture: the one-parameter family of complex structures that Florentino--Mattias--Mour\~{a}o--Nunes are considering are all obtained from the vertical distribution by means of the imaginary-time geodesic flow. Since the ``transport'' at the classical level (the Lagrangian distributions) is given in terms of the imaginary-time geodesic flow, it is not surprising that the transport at the quantum level (the Hilbert spaces) is given in terms of the heat equation. (The quantization of the energy function $E(x,v)=\frac{1}{2}g(v,v)$, by whatever quantization procedure one prefers, generally comes out to be the Laplacian plus a multiple of the scalar curvature, where in the case of a compact group with a bi-invariant metric, the scalar curvature is a constant.)

Thiemann makes a similar point in \cite{Thiemann96,Thiemann1}. He proposes that the transition from functions of the position variables to the holomorphic functions can be made by using the imaginary-time geodesic flow (as explained in Section \ref{sec:Taylor series}). He then argues that the quantum counterpart of this transition (the Segal--Bargmann transform) should be achieved by the quantum counterpart of the imaginary time geodesic flow, namely, the heat semigroup. Actually, Thiemann proposes that \textit{any} (sufficiently regular) Hamiltonian flow can be used in place of the geodesic flow, though very few examples have been considered so far. In a future paper, we hope to look at the Hamiltonian flow associated to a charged particle in a magnetic field.

Meanwhile, J. Huebschmann \cite{Huebschmann} has examined the results of \cite{Hall02} from the point of view of the Kirillov character formula. In Huebschmann's approach, the heat equation enters because the characters are eigenfunctions for the Laplacian.

\section{Holomorphic functions \label{sec:Taylor series}}

In the previous section, we looked at the action of the imaginary-time geodesic flow on the vertical distribution. In this section, we look at the action of the imaginary-time geodesic flow on functions. Associated to any distribution $P,$ there is a naturally associated class of functions, namely, those functions $f$ such that for every $z$ and every $X\in\overline{P_{z}},$ $Xf=0.$ (It is customary to consider functions that are constant in the directions of $\overline{P}$ rather than $P.$) In the case of the vertical distribution $\overline{P}=P$ and the functions constant in the $P$-directions are just the functions that are constant along each fiber of the tangent bundle. In the case of the distribution $P(i),$ the functions with derivative zero in the directions of $\overline{P(i)}$ are the holomorphic functions with respect to the adapted complex structure. We would like to see that these two classes of functions are related by the time-$i$ geodesic flow.

Suppose $\psi$ is a function whose derivatives in the vertical directions are zero. Then it is easily seen that the derivatives of $\psi\circ\Phi_{\sigma}$ in the directions of $P(-\sigma)$ are zero. If we formally set $\sigma=i,$ we see that the derivatives of $\psi\circ\Phi_{i}$ should be zero in the direction of $P(-i)=\overline{P(i)}.$ (Note that $P(\sigma)$ is real for real $\sigma,$ so that $P(\sigma-i\tau)=\overline{P(\sigma+i\tau)}.$) The conclusion is that if $\psi$ is constant along the leaves of the vertical distribution, then we expect $\psi\circ\Phi_{i}$ (suitably interpreted) to be holomorphic with respect to the adapted complex structure on $T^{R}M.$ In this section we will fulfill that expectation and give three different but equivalent ways of interpreting the expression $\psi\circ\Phi_{i}.$

If $\psi$ is constant along the vertical distribution, then $\psi=f\circ\pi$ for some function $f$ on $M.$ We wish to compose with $\Phi_{\sigma}$ and then ``set $\sigma=i$'' to obtain the function
\begin{equation}
f(\pi(\Phi_{i}(z))). \label{f_imaginary}
\end{equation}
The first way to interpret (\ref{f_imaginary}) is to look at the map
\begin{equation}
\sigma\rightarrow f(\pi(\Phi_{\sigma}(z))) \label{extend1}
\end{equation}
for a fixed $f$ and $z.$ This function is well defined for all real $\sigma,$ and we can attempt to analytically continue it to a ball of radius greater than $1$ about the origin in the complex plane. If such an analytic continuation exists, the value at $\sigma=i$ can but understood as the value of
(\ref{f_imaginary}).

The second way to interpret (\ref{f_imaginary}) is to think of the action of the geodesic flow on functions as (formally) the exponential of the Hamiltonian vector field $X_{E}.$ This approach is the one proposed by Thiemann in \cite{Thiemann96}. Since
\[
\frac{d}{d\sigma}f\circ\pi\circ\Phi_{\sigma}=X_{E}(f\circ\pi\circ\Phi_{\sigma
}),
\]
we have, at least formally,
\[
f\circ\pi\circ\Phi_{\sigma}=e^{\sigma X_{E}}(f\circ\pi),
\]
where
\begin{align*}
e^{\sigma X_{E}}\phi &  =\sum_{k=0}^{\infty}\frac{\sigma^{k}}{k!}X_{E}^{k}\phi\\
&  =\sum_{k=0}^{\infty}\frac{\sigma^{k}}{k!}\{E,\{E,\ldots\{E,\phi \}\ldots\}\}.
\end{align*}
This is Thiemann's point of view. (See also \cite{Hall-Mitchell02}, where this method is worked in a very explicit way in the case that $M$ is a sphere.)

The third way to interpret (\ref{f_imaginary}) is to observe that
\[
\pi(\Phi_{\sigma}(x,v))=\exp_{x}(\sigma v),
\]
where $\exp_{x}:T_{x}M\rightarrow M$ is the geometric exponential map. Suppose we embed $M$ in a real-analytic fashion as a totally real submanifold of some complex manifold $X$ of complex dimension $n$, as in work of Bruhat--Whitney \cite{Bruhat-Whitney} and Grauert \cite{Grauert58}. Then for each $x,$ the exponential map can be extended to a holomorphic map of a ball around the origin in $T_{x}^{\mathbb{C}}M$ into $X.$ If $f$ is real-analytic on $M,$ then it has a holomorphic extension $f_{\mathbb{C}}$ to a neighborhood of $M$ in $X$. Thus, the analytic continuation in $\sigma$ of (\ref{extend1}) can be accomplished by setting $f(\pi(\Phi_{\sigma}(z)))$ equal to $f_{\mathbb{C}}(\exp_{x}(\sigma v))$ for $\sigma$ in a small enough ball in $\mathbb{C}$ that $\sigma v\in T_{x}^{\mathbb{C}}M$ will lie in the domain of the analytic continuation of $\exp_{x}.$ Putting $t=i$ gives \begin{equation}
f(\pi(\Phi_{i}(z)))=f_{\mathbb{C}}(\exp_{x}(iv)), \label{exp_analytic}
\end{equation}
where on the right-hand side of (\ref{exp_analytic}), $\exp_{x}(iv)$ refers to the analytically continued exponential map.

What (\ref{exp_analytic}) is really saying is that we should identify $T^{R}M$ with a neighborhood of $M$ in $X$ by means of the map $(x,v)\rightarrow\exp_{x}(iv).$ (Thus, the value of a holomorphic function on $T^{R}M$ at $(x,v)$ is obtained by evaluating a holomorphic function on $X$ at $\exp_{x}(iv).$) This identification is implicit in Section 5 of \cite{Guillemin-Stenzel2} and was made explicit in a personal communication of Stenzel with the first author. The same identification is also used in the work of S. Halverscheid \cite{Halverscheid01}.

We now provide theorems showing that the holomorphic functions on $T^{R}M$ with respect to the adapted complex structure can indeed be obtained by means of the imaginary-time geodesic flow, using any one of the three interpretations discussed above. As mentioned in Section \ref{sec:main results}, the first part of the Theorem already appears in \cite{Lempert-Szoke}[Prop. 3.2], although our proof is independent.

\begin{theorem}
\label{thm:taylor}Let $R$ be as in Theorem \ref{thm:acs existence} and suppose a real-analytic function $f:M\rightarrow\mathbb{C}$ admits an analytic continuation $f_{\mathbb{C}}:T^{r}M\rightarrow\mathbb{C}$ for some $0<r\leq R.$ Then for each $z\in T^{r}M$, the map
\[
\sigma\mapsto\left(  f\circ\pi\circ\Phi_{\sigma}\right)  (z)
\]
admits an analytic continuation in $\sigma$ from $\mathbb{R}$ to $\{\sigma+i\tau\in\mathbb{C}:\left\vert \tau\right\vert <r/\left\vert v\right\vert \}.$ Moreover, the function $f_{\mathbb{C}}$ is given by the value of the continuation at $\sigma=i$,
\[
f_{\mathbb{C}}(z)=\left(  f\circ\pi\circ\Phi_{i}(z)\right)  ,
\]
and can be expressed as an absolutely convergent power series
\begin{equation}
f_{\mathbb{C}}(z)=\sum_{k=0}^{\infty}\frac{i^{k}}{k!}\left(  X_{E}^{k}(f\circ\pi)\right)  (z). \label{eqn:Taylor series}
\end{equation}
\end{theorem}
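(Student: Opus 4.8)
The plan is to reduce everything to the adaptedness of the complex structure, namely to the fact (Theorem~\ref{adapted.thm}, together with the Lempert--Sz\H{o}ke definition) that for a unit vector $u\in T_{x}M$ the map $\Psi_{(x,u)}(\sigma',\tau')=\big(\gamma_{(x,u)}(\sigma'),\,\tau'\dot\gamma_{(x,u)}(\sigma')\big)$ is holomorphic from the strip $S_{R}$ into $T^{R}M$. If $v=0$ the statement is trivial: $\Phi_{\sigma}$ fixes $(x,0)$, so $\sigma\mapsto(f\circ\pi\circ\Phi_{\sigma})(x,0)$ is the constant $f(x)$; since $X_{E}$ vanishes on the zero section, $X_{E}^{k}(f\circ\pi)$ vanishes there for $k\ge1$, so the series in (\ref{eqn:Taylor series}) collapses to $f(x)=f_{\mathbb{C}}(x,0)$. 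Assume then $v\neq0$, set $u=v/|v|$ and $w=(x,u)$, and record the identity $\pi(\Phi_{\sigma}(x,v))=\exp_{x}(\sigma v)=\gamma_{w}(|v|\sigma)$; since $f_{\mathbb{C}}$ restricts to $f$ on the zero section, this yields $(f\circ\pi\circ\Phi_{\sigma})(z)=\big(f_{\mathbb{C}}\circ\Psi_{w}\big)(|v|\sigma)$ for every real $\sigma$.

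Next I would produce the analytic continuation and identify the value at $\sigma=i$. The map $\Psi_{w}$ sends the strip $\{\sigma'+i\tau':|\tau'|<r\}$ into $T^{r}M$, since $|\tau'\dot\gamma_{w}(\sigma')|=|\tau'|<r$; hence $f_{\mathbb{C}}\circ\Psi_{w}$ is holomorphic there, being a composite of the holomorphic map $\Psi_{w}$ (Theorem~\ref{adapted.thm}) and the holomorphic function $f_{\mathbb{C}}$. Consequently $g_{z}(\sigma):=\big(f_{\mathbb{C}}\circ\Psi_{w}\big)(|v|\sigma)$ is holomorphic on $\{\sigma\in\mathbb{C}:|\operatorname{Im}\sigma|<r/|v|\}$ and agrees on $\mathbb{R}$ with $\sigma\mapsto(f\circ\pi\circ\Phi_{\sigma})(z)$; this is the asserted continuation. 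Because $z\in T^{r}M$ forces $|v|<r$, the point $\sigma=i$ lies in this strip, and $g_{z}(i)=\big(f_{\mathbb{C}}\circ\Psi_{w}\big)(i|v|)=f_{\mathbb{C}}\big(\Psi_{w}(i|v|)\big)=f_{\mathbb{C}}(x,|v|u)=f_{\mathbb{C}}(z)$, which is the assertion $f_{\mathbb{C}}(z)=(f\circ\pi\circ\Phi_{i})(z)$.

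For (\ref{eqn:Taylor series}) I would Taylor expand $g_{z}$ about $\sigma=0$. Since $\Phi_{\sigma}$ is the flow of $X_{E}$, one has $\tfrac{d}{d\sigma}(h\circ\Phi_{\sigma})=(X_{E}h)\circ\Phi_{\sigma}$ for any smooth $h$, so by iteration the $k$-th $\sigma$-derivative of $(f\circ\pi\circ\Phi_{\sigma})(z)$ at $\sigma=0$ is $\big(X_{E}^{k}(f\circ\pi)\big)(z)$; hence the Taylor series of $g_{z}$ at the origin is $\sum_{k\ge0}\tfrac{\sigma^{k}}{k!}\big(X_{E}^{k}(f\circ\pi)\big)(z)$. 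As $g_{z}$ is holomorphic on the disk of radius $r/|v|$ about $0$ (which is contained in the strip above), this series converges absolutely on that disk and equals $g_{z}$ there; since $|v|<r$ gives $1<r/|v|$, it converges absolutely at $\sigma=i$ with sum $g_{z}(i)=f_{\mathbb{C}}(z)$, which is (\ref{eqn:Taylor series}).

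The substantive ingredient --- that the time-$i$ geodesic flow is encoded holomorphically by the adapted complex structure, through the maps $\Psi_{z}$ --- has already been established (Theorem~\ref{adapted.thm}), so the remaining work is purely the bookkeeping of the fiber scaling and the verification of domain inclusions, the crucial one being $|v|<r$, which is precisely what makes the evaluation at $\sigma=i$ legitimate. (Alternatively one could bypass $\Psi_{w}$ and argue directly from $\pi\circ\Phi_{\sigma}(x,v)=\exp_{x}(\sigma v)$, the holomorphic extension of $\exp_{x}$, and the Grauert-tube identification $(x,v)\mapsto\exp_{x}(iv)$ recalled above, but the argument through $\Psi_{w}$ is the shortest.)
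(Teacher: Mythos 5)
Your proposal is correct and follows essentially the same route as the paper: the paper obtains the continuation from adaptedness via $\sigma+i\tau\mapsto f_{\mathbb{C}}(N_{\tau}\Phi_{\sigma}(x,v))$, which is exactly your $(f_{\mathbb{C}}\circ\Psi_{w})(|v|(\sigma+i\tau))$, and then identifies (\ref{eqn:Taylor series}) as the Taylor series of $t\mapsto(f\circ\pi\circ\Phi_{t})(z)$ converging on a disk of radius $r/|v|>1$. Your write-up merely makes the fiber scaling, the domain inclusions, and the degenerate case $v=0$ explicit.
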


\begin{remark}
The function $X_{E}^{k}(f\circ\pi)$ can be written $\{E,\dots,\{E,f\circ\pi\}\dots\}$ ($k$ Poisson brackets). A computation then shows that, for each fixed $x\in M,$ the function  $v\rightarrow X_{E}^{k}(f\circ\pi)(x,v)$ is a homogeneous polynomial of degree $k.$ This means that (\ref{eqn:Taylor series}) may be thought of as the real-variable Taylor series of $f_{\mathbb{C}}$ in the fibers. (The analogous construction in the case of the real line would be the expansion of a holomorphic function $f(x+iy)$ as a power series in $y$ for each fixed $x.$)
\end{remark}

\begin{remark}
Building on work of L. Boutet de Monvel, Guillemin and Stenzel \cite[Thm. 5.2]{Guillemin-Stenzel2} have shown that the following result holds for all sufficiently small $R$: A function $f$ on $M$ admits an analytic continuation to $T^{R}M$ that is smooth up to the boundary if and only if $f$ is of the form $\exp(-RP)g$ for some smooth function $g.$ Here $P=\sqrt{\Delta},$ where $\Delta$ is the (positive) Laplacian. This condition on $f$ implies that the Taylor series expansion of $\exp(RP),$ when applied to $f,$ converges (in, say, the sup norm). The convergence on $T^{R}M$ of the Taylor series expansion of $\exp(iX_{E})$, when applied to $f\circ\pi,$ is an analogous result in our approach to the subject.
\end{remark}

The following result provides a way to interpret the expression $\exp_x(iv);$ in particular, it gives a formula for the holomorphic extension of a function when such an extension exists. The result appears already in the proof of Proposition 3.2 in \cite{Szoke} (see also Theorem 3.4 in \cite{Szoke95}), and is also implicitly contained in Section 5 of \cite{Guillemin-Stenzel2}, in the assertion on p. 638 that the tubes $M_{\varepsilon}$ defined on p. 637 coincide with the ones defined by the level sets of the function $\rho(x,v)=\sqrt{g(v,v)},$. Nevertheless, we give a short proof in the interests of completeness.

\begin{theorem}
\label{thm:identify}Suppose $M$ is real-analytically embedded into a complex manifold $X$ as a totally real submanifold of maximal dimension. Then there exists $R>0$ such that (1) for all $x\in M,$ the geometric exponential map $\exp_{x}$ extends to a holomorphic map of a ball of radius $R$ in $T_{x}^{\mathbb{C}}M$ into $X,$ (2) the map $(x,v)\rightarrow\exp_{x}(iv)$ is a diffeomorphism of $T^{R}M$ into $X,$ and (3) the pullback of the complex structure on $X$ to $T^{R}M$ by this map is the adapted complex structure on $T^{R}M.$

It follows that if $f:M\rightarrow\mathbb{C}$ has a holomorphic extension $f_{\mathbb{C}}$ to a neighborhood of $M$ in $X,$ the corresponding holomorphic extension of $f$ to $T^{R}M$ with respect to the adapted complex structure is given by $(x,v)\rightarrow f_{\mathbb{C}}(\exp_{x}(iv)).$
\end{theorem}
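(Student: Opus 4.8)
The plan is to establish the three numbered assertions in turn, writing $\Xi(x,v):=\exp_x(iv)$ for the map under consideration, and then to read off the final ``It follows that'' sentence from assertion (3). For part (1), note that since $M\hookrightarrow X$ is real-analytic, the metric $g$, and hence the geodesic spray $X_E$, is real-analytic, so by Lemma~\ref{lemma:analytic} the geodesic flow $\Phi_\sigma$ is jointly real-analytic in $(\sigma,z)$; consequently $(x,v)\mapsto\exp_x(v)=\pi(\Phi_1(x,v))$ is real-analytic on a neighbourhood of the zero section in $TM$, with values in $M\subset X$. Fixing $x$ and reading $v\mapsto\exp_x(v)$ in a local holomorphic chart on $X$, it becomes a real-analytic $\mathbb{C}^n$-valued function of $v$, hence extends to a holomorphic function of $v\in T^{\mathbb{C}}_xM$ on a ball; after shrinking the ball its image stays in the chart, hence in $X$, and compactness of $M$ yields a radius $R$ that works for all $x$. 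I would also record the identity
\[
\Xi\bigl(\Psi_z(\sigma,\tau)\bigr)=\gamma_z^{\mathbb{C}}(\sigma+i\tau)
\]
for $(\sigma,\tau)\in S_R$ and $z$ a unit vector, where $\gamma_z^{\mathbb{C}}$ is the holomorphic continuation of the geodesic $\gamma_z$ to the strip $S_R$ (read off in holomorphic charts of $X$); this follows by analytically continuing in $t$ the elementary relation $\exp_{\gamma_z(\sigma)}(t\dot\gamma_z(\sigma))=\gamma_z(\sigma+t)$ and setting $t=i\tau$, using $|\dot\gamma_z(\sigma)|=1$ to remain in the domain of the continued exponential.

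For part (2), I would compute $d\Xi$ along the zero section. Since $\exp_x(0)=x$ and $d(\exp_x)_0=\mathrm{id}$, under the splitting $T_{(x,0)}TM\cong T_xM\oplus T_xM$ (tangent to the zero section $\oplus$ vertical) the differential $d\Xi_{(x,0)}\colon T_xM\oplus T_xM\to T_xX$ is $(a,b)\mapsto a+J_Xb$, with $T_xM\subset T_xX$ and $J_X$ the complex structure of $X$. Because $M$ is totally real of maximal dimension, $T_xM\oplus J_XT_xM=T_xX$, so $d\Xi_{(x,0)}$ is a linear isomorphism, and the inverse function theorem makes $\Xi$ a local diffeomorphism near each point of the zero section. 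Since $\Xi$ restricts on the zero section to the embedding $M\hookrightarrow X$, which is injective, and the zero section is compact, $\Xi$ is injective on some $T^RM$, hence an open embedding there.

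For part (3), rather than verifying directly that $d\Xi$ intertwines the two complex structures at a general point, I would argue as follows. Since $\Xi$ is an open embedding, $\Xi^{*}J_X$ is a well-defined, integrable complex structure on $T^RM$. By part (1), $\Xi\circ\Psi_z=\gamma_z^{\mathbb{C}}(\sigma+i\tau)$ is holomorphic in the single variable $\sigma+i\tau$ for every unit $z$, so $\Psi_z=\Xi^{-1}\circ(\Xi\circ\Psi_z)$ is holomorphic as a map $S_R\to(T^RM,\Xi^{*}J_X)$; hence $\Xi^{*}J_X$ is adapted in the sense of Lempert--Sz\H{o}ke. As the complex structure $J$ of Theorem~\ref{thm:acs existence} is also adapted (Theorem~\ref{adapted.thm}) and the adapted complex structure on $T^RM$ is unique for $R$ sufficiently small, we get $\Xi^{*}J_X=J$ after shrinking $R$ once more if necessary; this is precisely the assertion of part (3). (A more explicit verification is available through Section~\ref{jacobi.sec}: one computes $d\Xi$ on the horizontal and vertical lifts $\xi_k,\eta_k$ in terms of the holomorphically continued Jacobi fields and the matrix $f_z$ and checks $d\Xi(P_z(i))=T^{1,0}_{\Xi(z)}X$, though this requires care about which bundle the continued Jacobi fields live in.) The final sentence is then immediate: for $f_{\mathbb{C}}$ holomorphic near $M$ in $X$, the function $f_{\mathbb{C}}\circ\Xi$ is holomorphic on $(T^RM,J)$ and restricts to $f$ on the zero section, which is a totally real submanifold of maximal dimension for $J$ (by part (2), $J$ carries its tangent space onto the vertical subspace), so holomorphic functions on $T^RM$ are determined by their restrictions there and $f_{\mathbb{C}}\circ\Xi$ is the holomorphic extension of $f$.

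The routine parts are (1) and (2): real-analyticity of the geodesic flow, holomorphic charts on $X$, the inverse function theorem, and compactness of $M$. The main obstacle is part (3). The difficulty is that the holomorphic strips $\Psi_z(S_R)$ are only one complex dimensional, so the identity from step (1) controls $d\Xi$ in just one complex direction at each point; propagating the pointwise intertwining relation off the zero section is not obviously possible by analyticity alone, and the cleanest remedy is to recognise $\Xi^{*}J_X$ as an adapted complex structure and appeal to its uniqueness — which is why step (1), despite being elementary, is doing the real work in disguise.
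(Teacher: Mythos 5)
Your proposal is correct, but it is organized differently from the paper's proof, and it is worth comparing the two. The paper begins by invoking the Bruhat--Whitney/Grauert uniqueness of complexifications: there is a biholomorphism of some $(T^{R}M,J)$ onto a neighborhood of $M$ in $X$ fixing $M$. This reduces all three assertions at once to the single identity $\exp_{x}(iv)=(x,v)$ of Remark \ref{ident.rem}, which the paper then verifies by analytically continuing $\sigma\mapsto\exp_{x}(\sigma v)=(\gamma(\sigma),0)$ to $\sigma+i\tau\mapsto(\gamma(\sigma),\tau\dot{\gamma}(\sigma))$ using adaptedness and evaluating at $\sigma=i$. You instead build the map $\Xi(x,v)=\exp_{x}(iv)$ into $X$ directly, prove assertions (1) and (2) by hand (holomorphic extension in charts, the differential computation $(a,b)\mapsto a+J_{X}b$ on the zero section, the inverse function theorem, and compactness), and then obtain (3) by showing $\Xi^{*}J_{X}$ is adapted and appealing to the Lempert--Sz\H{o}ke uniqueness of the adapted complex structure. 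So the two arguments trade one external uniqueness theorem for another: the paper uses uniqueness of the complexification and gets (1) and (2) essentially for free from the cited biholomorphism, while you use uniqueness of the adapted structure and must supply (1) and (2) explicitly. The genuinely common core is the same in both: the analytic continuation of the geodesic relation $\exp_{\gamma_{z}(\sigma)}(t\dot{\gamma}_{z}(\sigma))=\gamma_{z}(\sigma+t)$ along the strips $\Psi_{z}$, which in your write-up appears as the identity $\Xi\circ\Psi_{z}=\gamma_{z}^{\mathbb{C}}$; your closing observation that this identity, not the routine chart manipulations, is what forces $\Xi^{*}J_{X}$ to be adapted is exactly right, and is the same mechanism the paper uses inside $T^{R}M$ rather than inside $X$.
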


\begin{remark}
\label{ident.rem}If we take $X$ to be $T^{R}M$ itself with the adapted complex structure, then the identification in the theorem becomes simply an identity: $\exp_{x}(iv)=(x,v).$
\end{remark}

\begin{proof}
[Proof of Theorem \ref{thm:taylor}]It follows from the notion of adaptedness that the map $\sigma\longmapsto\exp_{x}(\sigma v)=\pi(\Phi_{\sigma}(x,v))$ has an analytic continuation in $\sigma$ to a strip for fixed $(x,v).$ The continuation is given by $\sigma+i\tau\longmapsto N_{\tau}\Phi_{\sigma}(x,v).$ Thus $t\mapsto\left(  f\circ\pi\circ\Phi_{t}\right)  (z)$ has a continuation given by $\sigma+i\tau\longmapsto f_{\mathbb{C}}(N_{\tau}\Phi_{\sigma}(x,v)).$

Now, the restriction of $E$ to each fiber is a homogeneous polynomial of degree 2, whereas the restriction to each fiber of $f\circ\pi$ is constant. It then follows by a simple inductive computation that the restriction to each fiber of $X_{E}^{k}(f\circ\pi)$ is a homogeneous polynomial of degree $k.$ When restricted to a line through the origin in $T_{x}M,$ (\ref{eqn:Taylor series}) is just the Taylor series of the map $t\mapsto\left(f\circ\pi\circ\Phi_{t}\right)  (z),$ which converges to the function in any disk lying within the strip.
\end{proof}

\medskip

\begin{proof}
[Proof of Theorem \ref{thm:identify}]
There exists a biholomorphism from some $T^{R}M$ with the adapted complex structure to a neighborhood $U$ of $M$ in $X.$ If we use this biholomorphism to identify $U$ with $T^{R}M,$ then it suffices to verify the identity in Remark \ref{ident.rem}. But if we analytically continue $\exp_{x},$ then the map $c\rightarrow\exp_{x}(cv)$ will be holomorphic for $c$ belonging to a neighborhood of the origin in $\mathbb{C}.$ So it suffices to analytically continue the map $\sigma\rightarrow\exp_{x}(\sigma v).$

Now, since the complex structure on $T^{R}M$ is ``adapted'' in the sense of Lempert--Sz\H{o}ke (Theorem \ref{adapted.thm}), the map sending $\sigma+i\tau$ to $(\gamma(\sigma ),\tau\dot{\gamma}(\sigma))$ is a holomorphic map of a strip in $\mathbb{C}$ into $T^{R}M.$ If $\gamma$ is the geodesic with initial value $x$ and initial derivative $v,$ then $\exp_{x}(\sigma v)$ is nothing but $\gamma(\sigma v),$ which we identify with $(\gamma(\sigma v),0)\in TM.$ Thus, the analytic continuation of the map $\sigma\rightarrow\exp_{x}(\sigma v)$ is the map $\sigma+i\tau\rightarrow(\gamma(\sigma),\tau\dot{\gamma}(\sigma)).$ Evaluating at $i$ gives $\exp_{x}(iv)=(\gamma(0),\dot{\gamma}(0))=(x,v).$
\end{proof}

\section*{Acknowledgments}
The authors are grateful to the referee for a very careful reading of the paper and numerous helpful observations, especially regarding the proofs of Theorems \ref{thm:continuation}, \ref{thm:integrable}, and \ref{thm:kaehler_potential}, as well as for pointing out sign errors in an earlier version, for several comments regarding the exposition, and for bringing to their attention the article \cite{Szoke95}.

{\small
\providecommand{\bysame}{\leavevmode\hbox to3em{\hrulefill}\thinspace}
\providecommand{\MR}{\relax\ifhmode\unskip\space\fi MR }
\providecommand{\MRhref}[2]{%
  \href{http://www.ams.org/mathscinet-getitem?mr=#1}{#2}
}
\providecommand{\href}[2]{#2}

}

\end{document}